\def \frac#1#2{{#1\over #2}}
\def\1{{\bf 1}}
\def\0{{\bf 0}}
\def\2{{\bf 2}}
\def\x{{\bf x}}
\def\v{{\bf v}}
\def\w{{\bf w}}
\def\b{{\bf b}}
\def\I{{\bf I}}
\def\cfrac#1#2{{#1\over #2}}
\newtheorem{defi}{Definition}[section]
\newtheorem{risu}{Result}[section]
\newtheorem{prop}{Proposition}[section]
\newtheorem{rem}{Remark}[section]
\newtheorem{pr}{Property}[section]
\newcommand{\be}{\begin{enumerate}}
\newcommand{\ee}{\end{enumerate}}
\newcommand{\bi}{\begin{itemize}}
\newcommand{\ei}{\end{itemize}}
\newcommand{\beq}{\begin{equation}}
\newcommand{\eeq}{\end{equation}}
\numberwithin{equation}{section} %in the preamble will prepend the section number to all equation numbers.
\begin{document}

\baselineskip=14 pt

\title{Bilateral symmetry and modified Pascal triangles\\ in Parsimonious games\footnote{A very preliminary version of the paper has been presented at the 2013 Workshop of the Central European Program in Economic Theory, which took place in Udine (20-21 June) and may be found in CEPET working papers \cite{PrePla13}.}}

\author{Flavio Pressacco\textsuperscript{a}, Giacomo Plazzotta\textsuperscript{b} and Laura Ziani\textsuperscript{c}}

\date{}

\maketitle

\noindent\textsuperscript{a} Dept. of Economics and Statistics D.I.E.S., Udine University, Italy\\ %via Tomadini 30/a, 33100 Udine, Italy. \\
%Corresponding author. Email: \verb"flavio.pressacco@uniud.it"\\Phone: +390432249310; Fax: +390432249329
\textsuperscript{b} Imperial College London, UK\\
\textsuperscript{c} Dept. of Economics and Statistics D.I.E.S., Udine University, Italy %, via Tomadini 30/a, 33100 Udine, Italy

\begin{abstract}
We discuss the prominent role played by bilateral symmetry and
modified Pascal triangles in self twin games, a subset of
constant sum homogeneous weighted majority games. We show that
bilateral symmetry of the free representations unequivocally
identifies and characterizes this class of games and that
modified Pascal  triangles describe their cardinality for
combinations of $m$ and $k$, respectively linked through linear
transforms to the key parameters $n$, number of players and
$h$, number of types in the game.

Besides, we derive
the whole set of self twin games in the form of a genealogical
tree obtained through a simple constructive procedure in which
each game of a  generation, corresponding to a given value of
$m$, is able to give birth to one child or two children
(depending on the parity of $m$), self twin games of the next
generation. The breeding rules are, given the parity of $m$, invariant
through generations and quite simple.

\end{abstract}

\renewcommand{\abstractname}{Keywords}
\begin{abstract}
Homogeneous weighted majority games; bilateral
symmetry; modified Pascal triangles; games representations.
\end{abstract}

\renewcommand{\abstractname}{Acknowledgements}
\begin{abstract}
 We wish to thank Michele Giacomini, former student of the 2013 Game Theory
short course at the Scuola Superiore Università di Udine,
for a lot of valuable suggestions and comments on an earlier draft of the paper.
\end{abstract}

\section{Introduction}

Bilateral Symmetry ($BS$) and Modified Pascal Triangles ($MPT$)
are issues that play a significant role in various fields of
sciences. Just to recall some of the most important papers on these subjects let
us quote, for $BS$, the milestone book of Weyl (\cite{We52},
1952) and the papers by Gardner (\cite{Ga71}, 1971), Møller and
Thornill (\cite{MoTh98}, 1998), Finnerty (\cite{Fi03}, 2003),
Song et al (\cite{Song10}, 2010), Palmer (\cite{Pa04}, 2004),
and for $MPT$, Ando (\cite{An88}, 1988), Granville
(\cite{Gra92}, 1992), Barry (\cite{Bar06}, 2006), Trzaska
(\cite{Trz91}, 1991) and Bollinger (\cite{Bo93}, 1993).

In this paper we will see that both issues jointly emerge in a
game theory framework: precisely, as a key to self twin\footnote{In Isbell terminology, self dual games.}
``parsimonious'' games, a class of games introduced by Isbell (\cite{Isb56}, 1956) in
the early stage of game theory,
which at the best of our knowledge did not receive then any
attention.

Parsimonious games (hereafter $P$ games) are the subset of
constant sum homogeneous weighted majority games characterized
by the ``parsimony property'' to have, for any number $n$ of
non dummy players in the game, the smallest number (i.e.
exactly $n$) of minimal winning coalitions.

For such games the incidence matrix $M$ is the binary square
$n\times n$ matrix whose entries are 1 (or 0) if column player
$i$ belongs (does not belong) to the row minimal winning
coalition $j$. A twin (dual) relationship on $P$ games has been
introduced by Isbell (\cite{Isb56}, p. 185) through the
following property of their incidence matrices: the transposed
${M}^{T}$ of the incidence matrix ${M}$ of any $P$ game ${G}$
is the incidence matrix ${\overline{M}}$ of a $P$ game
${\overline{G}}$ called the twin of ${G}$.

${G}$ is self twin (self dual) if ${\overline{G}}$ is identical
to ${G}$. We denote by $STP$ the set of self twin $P$ games.

Our starting point is that an alternative and more friendly description of the twin relationship may be obtained making recourse to a specific property of the vectorial representations of $P$ games.

Usually constant sum homogeneous weighted majority games,
including $P$ games, are described by their classical minimal
homogeneous representation $(q; \w)$ where
$\w=\{w_1,w_2,\ldots,w_n\}$ is the ordered $(w_1=1, w_i\leq
w_{i+1}$ for any $i$) vector of individual minimal homogeneous
integer weights and $q$ the minimal winning quota given by $q=\cfrac{1+\sum_{i\in N} w_i}{2}$ and satisfying $q=\sum_{i\in S} w_i$, for any coalition $S$ of the so called minimal winning set\footnote{Note that $N$ is the
``grand'' coalition of all players.}.

As shown in sect. \ref{Sect:Pgamesalt}, alternative unequivocal representations of a $P$ game may be given either through its binary representation or through its type representation.
In particular, the binary representation is the ordered $n$ dimensional vector $\b$ with
components $b_i=1$ if $w_i>w_{i-1}$, $b_i=0$ otherwise
$(w_i=w_{i-1})$. Note that the condition $w_0=0$, which gives zero weight to the fictitious player ($i=0$) representing the set of dummies, implies $b_1=1$.   The type representation is
the ordered $h$ $(2\leq h\leq n-2)$ dimensional vector
$\x=(x_1,x_2,\ldots,x_h)$ whose component
$x_t$ is the number of players of type $t$ (sharing the same
common type weight $w_t>w_{t-1}$) in the game.

Yet, some of the information embedded in such
representations is redundant; indeed, the first two and the
last two components of $\b$, like the last component
of $\x$, are the same in any $P$ game. Then, what
really matters in a $P$ game is the, $n-4$ dimensional, free
binary representation $_f\b=(b_3,\ldots,b_{n-3})$, or
respectively the, $h-1$ dimensional, free type representation
$_f\x=(x_1,x_2,\ldots,x_{h-1})$.

In terms of these free representations, $STP$ games are (as
shown in Pressacco-Plazzotta, 2013, \cite{PrePla13}, sect. 4.2,
p. 9) the subset of $P$ games characterized by bilateral
symmetry. Exploiting bilateral symmetry it is straightforward
to obtain closed form formulae for the cardinality of $STP$
games for feasible combinations of $(n,h)$ and to show that a
properly $MPT$, $\Gamma$, with entries $\Gamma(m,k)$, describes
the cardinality of $STP$ games with $n=m+4$ players and $h=k+2$
types.

Indeed we will show that the rule which
governs the evolution with $m$ of the $MPT$ mimics
the one holding in the classical Pascal triangle (i.e. the ``Pascal equation''), except for internal
entries corresponding to combinations of even $m$ and odd $k$. In detail, for such combinations is it $\Gamma(m,k)=\Gamma(m-1,k)-\Gamma(m-1,k-1)$, so as the difference
rather than the sum of the adjacent entries of the previous row
is to be computed (obviously this modification has a feedback
on the whole triangle).

On the way we describe also the
regularities of other related $MPT$: $\Delta$, which
describes the cardinality (still as a function of the couple
$m,k$) of the $P$ non self twin games, and $\Theta=\Delta/2$ the cardinality of couples of (non identical) twins.

Besides these counting
results, we found also that the whole set of $STP$
may be described in the form of a genealogical tree, derived
through a constructive procedure. In this approach successive
generations of $STP$ are associated to the sequence
of $m$ values, and each game may be seen as a parent which
gives birth to one child or two children (depending on the even
or odd character of the parent generation), $STP$ games of
the next generation. The breeding rules, defined in terms of
the free type representation, are simple and generations invariant,
albeit parity specific.

A key role in the evolution of the genealogical tree is played by the pivot components of the subset
of $STP$ games, whose $_f\x$ representation has an odd number of
components (shortly OSTP games). We show that the structure of the pivot's set is described by other ``regular'' triangles.

The plan of the paper is as follows. Section
\ref{sect:classicP} recalls basic concepts of majority games, resumes relevant results on $P$ games\footnote{Some of these results are directly given by Isbell, while others are derived as straightforward consequences
of his approach, including the role of the classic Pascal
triangle in describing the cardinality of $P$ games with $n$
players and $h$ types.} and introduces the alternative binary and type representations, which play a key role in the subsequent sections. Section \ref{sect:TwinAndST} is devoted to define bilateral symmetry seen as a particular aspect of the symmetry (or twin) relationship on $P$ games.
In Section \ref{sect:selftwin} closed form
expressions of the number of $STP$ games for different feasible
combinations of the parameters $n,h$ or $m,k$ are obtained.
Section \ref{sect:MPTinST}, the core of the paper, suggests and
discusses an interpretation in terms of $MPT$ of such formulas.
In Section \ref{sect:NewApproachST} we build a genealogical tree
of $STP$ games, through the application of simple evolutionary
rules that may be translated in high speed algorithms. Section
\ref{sect:Pivot} introduces the pivot triangles and give
results on their main evolutionary properties. Section
\ref{sect:example} gives some examples and conclusions
follow in Section \ref{sect:concl}.

\section{Definitions and main results on $P$ games}\label{sect:classicP}

\subsection{Basics of weighted majority games}\label{sect:classicp1}

As usual $N=(1,\ldots,n)$ denotes the set of all (non dummy) players.
\begin{defi}
A coalition $S$ is a subset of the set $N$. A game $G$ in coalitional function form is defined by the coalitional function of the game, that is the real function $v: \mathcal{P}(N) \rightarrow \mathbb{R}$.
\end{defi}
\begin{defi}
A game in coalitional function form is simple if its $v$ function has values in $\{0,1\}$. A coalition $S$ is winning if $v(S)=1$, losing if $v(S)=0$.
\end{defi}
\begin{defi}
A simple game is constant sum if, for any $S \in \mathcal{P}(N)$, $v(S)+v(\widetilde{S})=1$.
\end{defi}
\begin{defi}
A coalition $S$ is said to be minimal winning if $v(S)=1$ and, for any $T\varsubsetneq S$, $v(T)=0$. The set of minimal winning coalitions is denoted by $WM$. A player $i$ who does not belong to any minimal winning coalition is said dummy; at the other extreme, it is a dictator if it is $v(i)=1$.
We will consider games free of dictator and dummies.
\end{defi}
\begin{defi}
A simple, weighted majority game is described by a
representation $(q;\w)$ where $\w$ is a vector
$(w_1,w_2,\ldots,w_n)$ of positive weights,
$q>\cfrac{1}{2}\cdot w(N)=\cfrac{1}{2}\cdot\sum_{i\in N}w_i$ is
the winning quota and $v(S)=1\Leftrightarrow w(S)=\sum_{i\in
S}w_i\geq q$.
\end{defi}
\begin{defi}
A representation $(q;\w)$ of a weighted majority game is homogeneous if $w(S)=q$ for any $S$ of $WM$.
\end{defi}
\begin{defi}
Homogeneous weighted majority games are games for which (at least) a homogeneous representation exists.
\end{defi}

Consider now the class of all simple, constant sum $n$ person homogeneous weighted majority games\footnote{At the origins of game
theory, homogeneous weighted majority games (h.w.m.g.) have been
introduced in \cite{VNM47} by Von Neumann-Morgenstern and have
been studied mainly under the constant sum condition.
Subsequent treatments in the absence of the constant sum
condition (with deadlocks) may be found e.g. in \cite{Ost87} by
Ostmann, who gave the proof that any h.w.m.g. (including non
constant sum ones) has a unique minimal homogeneous
representation, and in \cite{Ros87}. Generally speaking, the
homogeneous minimal representation is to be thought in a
broader sense but hereafter the restrictive application
concerning the constant sum case is used.}.
\begin{risu}
All games of such a class admit a minimal homogeneous representation, that is a (homogeneous) representation $(q;\w)$ such that all weights are integers, there are players with (minimum) weight 1 and $q=\cfrac{w(N)+1}{2}$, which implies for any $S \in WM$, $w(S)-w(\widetilde{S})=1$.
\end{risu}

\begin{rem}
Hereafter we will suppose that in such a representation the vector $\w$ is ordered according to the convention $w_1=1, w_i\leq w_{i+1}$ for any $i$.
\end{rem}

\begin{risu}
The cardinality of the $WM$ set of our class may be either greater or equal (but not lower) than $n$. See \cite{Isb56}, p. 185.
\end{risu}

\subsection{Parsimoniuos games and their alternative representations}\label{Sect:Pgamesalt}

\begin{defi}
We call Parsimoniuos games (hereafter $P$ games) the subset of constant sum homogeneous weighted majority games characterized by the parsimony property to have for any given number $n$ of non dummy players in the game the smallest number, i.e. exactly $n$, of minimal winning coalitions.
\end{defi}

The nice properties of this class of games have been studied in a ground breaking paper (Isbell, 1956, \cite{Isb56}) going back to the early stage of game theory development.
In particular, there it has been found  that a $P$ game is univocally described also by an alternative binary representation.
\begin{defi}
Given a $P$ game $G$ with minimal homogeneous representation $(q;\w)$, its binary representation is the ordered (according to the non decreasing weight labelling) $n$ dimensional vector $\b$ with components\footnote{The initial condition $w_0=0$ which gives weight zero to the fictitious player labelled 0, representing the set of all dummy players, would give $b_1=1$, coherently with the general rule.}: $b_1=1$ and
for $i=2,\ldots,n$, $b_i=1$ if $w_i>w_{i-1}$, $b_i=0$
otherwise, i.e if $w_i=w_{i-1}$.
\end{defi}

\begin{risu}\label{Res:ris3}
The following constraints (\cite{Isb56}, p. 185) hold, in any
$P$ game, on the first two and on the last two components of
$\b$:
$b_1=1$, $b_2=0$, $b_{n-1}=0$, $b_n=1$.
Apart from these constraints, the choice of the other components of the binary representation is fully free.
\end{risu}

\begin{defi}
The free binary representation of a $P$ game with binary
representation $\b$ is the ordered $n-4$ dimensional vector of
the internal components of $\b$. We denote such a vector by
$_f\b= (b_3,\ldots,b_{n-2})$.
\end{defi}

\begin{risu}
There is a one-to-one correspondence between the set of all
feasible choices of $_f\b$ and the set of $P$ games; hence,
putting (for any $n>3$)\footnote{We will consider here $P$
games with $n>3$. Indeed, for $n=3$ there is a unique $P$ game
in which all players share the same weight, while all $P$ games
with $n>3$ have at least two types of players.} $m=n-4$, there
are exactly $2^m$ $P$ games with $n=m+4$ (non dummy) players.
\end{risu}

Another alternative representation of a $P$ game may be obtained from the following considerations.
Given the minimal homogeneous representation of the game, let us divide the players in $h$ non overlapping subsets, according to their weight. For any $t=1,\ldots,h$ let $w_t$ denote the type weight (i.e. the common individual weight of all players of the group labelled $t$). Types too are labelled according to the (now) strictly increasing weight convention, i.e. $w_1=1$, $w_{t-1}<w_t$ for any $t$.

\begin{defi}
The (complete) type representation of a $P$ game is the ordered h dimensional vector $\x=(x_1,x_2,\ldots,x_h)$ whose component $x_t$ is the number of players of group $t$ in the game.
\end{defi}
Note that
\begin{risu}
$(n-2)\geq h\geq 2$
\end{risu}

Indeed there are as many types in a game as the number of 1's components in its binary representation and we know that there are at least two zeros as well as at least two 1's. Moreover it turns out that in any $P$ game the following constraints hold on $\x$ :

\begin{risu}
$$x_h=1$$
$$x_1\geq 2$$
$$x_{h-1}\geq 2$$
\end{risu}

\begin{proof} Immediate consequence of the constraints given by Result \ref{Res:ris3}.
\end{proof}

\begin{rem}
For $h=2$, $h-1=1$ and $x_1=n-1$.
\end{rem}

Keeping account of the surely binding constraint $(x_h =1)$, the really relevant (indeed full) information on a $P$ game is embedded in the first $(h-1)$ components of $\x$.

\begin{defi}
The truncated or free\footnote{Note that the other constraints
on $x_1$ and $x_{h-1}$ may leave some freedom in the choice of
the related components or be fully binding depending on the
values of the couple $(n,h)$.} type vector
$_f\x=(x_1,x_2,\ldots,x_{h-1})$ is the $h-1$ dimensional vector
obtained by deletion of the last (obliged) component of $\x$.
\end{defi}

To summarize: besides by their minimal homogeneous representation  $(q;\w)$ $P$ games are unequivocally described either by $_f\b$ or by $_f\x$.

Indeed the connection between $_f\x$ and $(q;\w)$ is given by the following recursive relation on type weights:
\begin{equation}\label{Fnct:recursive}
\begin{split}
w_0&=0\\
w_1&=1\\
w_t&=x_{t-1}\cdot w_{t-1}+ w_{t-2}, \quad t=2,\ldots, h-1\\
w_h&=(x_{h-1}-1)\cdot w_{h-1}+w_{h-2}\\
\end{split}
\end{equation}

\paragraph{Example.} Just to give an example consider the nine person $P$ game with five types and free type structure  $_f\x=(2,2,1,3)$.
There are two players of the first type with individual (and
type) weight $w_1=w_2=1$; the next two players (labelled 3 and
4) of type 2 with type weight $w_2=x_1\cdot w_1+ w_0=2$, so
that $w_3=w_4=2$; one player of type 3 with type weight
$w_3=x_2\cdot w_2+ w_1=5$ hence individual weight $w_5=5$;
three players of type 4 and type weight $w_4=x_3\cdot w_3+
w_2=7$ and hence individual weights $w_6=w_7=w_8=7$ and finally
one top player (type 5) whose type weight is given by
$w_5=(x_4-1)\cdot w_4+w_3=19$. The sequence $\w$ of individual
weights in the minimal homogeneous representation is then
$(1,1,2,2,5,7,7,7,19)$ and the corresponding minimal winning
quota is $q=26$.

After that, even if it has not been explicitly stated in the Isbell paper, the same argument used there to derive the number of $n$ person $P$ games may be applied to obtain immediately that:

\begin{risu}
The number of $P$ games with $n=m+4$ players and $h=k+2$ types is given, for any $k=0,\ldots,m,$ by:
\begin{equation}\label{Fnct:numbP}
C_{m,k}=\cfrac{m!}{k!(m-k)!}
\end{equation}
\end{risu}

Hence putting $m=0,1,\ldots,n-4$ on rows and $k=0,\ldots,m$ on
columns, the classic Pascal triangle gives the number of
different $P$ games with $n=m+4$ players and $h=k+2$ types.
Keeping account of this coincidence, we will denote by $C$
(with components $C(m,k)=C_{m,k}=\frac{m!}{k!(m-k)!})$ this
triangle.
In the next sections we wish to extend these old results to
obtain a $MPT$, $\Gamma$, which gives for any combination of
$(m,k)$, the number $\Gamma(m,k)$ of $STP$ games with $n=m+4$
players and $h=k+2$ types.

\section{Bilateral symmetry in $P$
games}\label{sect:TwinAndST}

In this section we introduce bilateral symmetry in $P$
games\footnote{Elsewhere (Pressacco-Plazzotta, 2013,
\cite{PrePla13}, sect. 4, Theorem 4.2) we show that bilateral
symmetry is the exact counterpart of self duality in $P$ games
a property suggested, as said in the introduction, by Isbell
(\cite{Isb56}, pp. 185-186) on the basis of special character
of the incidence matrices of such games.}.
\begin{defi}\label{risu:risu1}
The free binary representation $_f\b$ of a $P$ game $G$ is bilaterally symmetric or self symmetric if, for any $i=3,\ldots,n-2$:
$$
b_i=b_{n+1-i}
$$
\end{defi}

\begin{prop} \label{prop:3.1}
Bilateral symmetry of $_f\b$ is equivalent to bilateral symmetry of $\b$.
\end{prop}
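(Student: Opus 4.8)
The plan is to unwind both definitions and reduce the equivalence to a check on the four fixed boundary entries of $\b$. Bilateral symmetry of the full vector $\b$ should be read as the family of equations $b_i = b_{n+1-i}$ for every $i = 1, \ldots, n$, whereas bilateral symmetry of $_f\b$ is the very same family restricted to the internal range $i = 3, \ldots, n-2$. A preliminary observation I would record is that for $i$ in this internal range the mirror index $n+1-i$ again lies in $\{3, \ldots, n-2\}$, so the condition on $_f\b$ involves only internal components and is a genuine sub-family of the conditions on $\b$.

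First I would dispose of the easy implication: if $\b$ is bilaterally symmetric, then in particular $b_i = b_{n+1-i}$ holds for $i = 3, \ldots, n-2$, so $_f\b$ is bilaterally symmetric. This is mere restriction of the index set, with no computation involved.

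For the converse I would show that symmetry of $_f\b$ already forces the full symmetry of $\b$, because the only equations not yet covered are those for $i = 1$ and $i = 2$ (the indices $i = n-1, n$ give back the same two equations under the involution $i \mapsto n+1-i$). These missing equations are $b_1 = b_n$ and $b_2 = b_{n-1}$. By Result \ref{Res:ris3} we have $b_1 = b_n = 1$ and $b_2 = b_{n-1} = 0$ in every $P$ game, so both equations hold automatically, independently of the particular game. Hence symmetry of $_f\b$ together with these boundary constraints yields $b_i = b_{n+1-i}$ for all $i = 1, \ldots, n$, i.e. bilateral symmetry of $\b$.

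There is no substantial obstacle here: the content of the statement is precisely that the four obligatory boundary entries of $\b$ are themselves mirror-symmetric, so imposing symmetry on the free part is equivalent to imposing it on the whole vector. The only point demanding a little care is the bookkeeping of the index involution $i \mapsto n+1-i$, namely confirming that the pairs $(1,n)$ and $(2,n-1)$ are exactly the ones falling outside the free range and that both are settled once and for all by Result \ref{Res:ris3}.
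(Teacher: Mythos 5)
Your argument is correct and is exactly the paper's proof, made explicit: the paper's one-line justification (``immediate keeping account that $b_1=b_n=1$ and $b_2=b_{n-1}=0$'') is precisely your observation that the only mirror conditions outside the free range are $b_1=b_n$ and $b_2=b_{n-1}$, which Result \ref{Res:ris3} settles automatically. Your additional bookkeeping (checking that the involution $i\mapsto n+1-i$ preserves the internal range $\{3,\ldots,n-2\}$) is a sound elaboration of the same idea, not a different route.
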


\begin{proof}
Immediate keeping account that, in any $P$ game,
$b_1=b_n=1$ and $b_2=b_{n-1}=0$.
\end{proof}

\begin{defi}
The free type representation $_f\x$ of a $P$ game $G$ is bilaterally symmetric or self symmetric if,
for any $t=1,\ldots,h-1$:
$$x_t=x_{h-t}$$
\end{defi}

The following result links bilateral symmetry of the $_f\b$ and $_f\x$ representations of a game:
\begin{risu}
Let $_f\b$ and $_f\x$ free binary and free type representations of a game $G$; $_f\b$ is bilaterally symmetric if and only if $_f\x$ is bilaterally symmetric.
\end{risu}

Proof is given in Appendix \ref{App:A}.

\begin{defi}
A $P$ game is bilaterally symmetric (shortly is $BSP$) if its free representations (binary or type) are bilaterally symmetric.
\end{defi}

In the next sections we will often use the terminology ``self twin'' games in place of ``bilaterally symmetric'' games. More general symmetry relationships in $P$ games come along the following lines.

\begin{defi}
A couple ${G}$, ${\overline{G}}$ of $P$ games are each other
symmetric if their free binary representations $_f{\b}$ and
$_f{\overline{\b}}$ are each other symmetric, i.e if, for any
$i=3,\ldots,n-2$:
$$
{\overline{b_i}}={b_{n+1-i}}
$$
\end{defi}

\begin{rem}
This property too may be given equivalently in terms of symmetry of the free type representations.
\end{rem}

\begin{defi}
A couple ${G}$, ${\overline{G}}$ of $P$ games are each other
symmetric if their free type representations $_f{\x}$ and
$_f{\overline{\x}}$ are each other symmetric, i.e if for any
$t=1,\ldots,h-1$:
$$
{\overline{x_t}}={x_{h-t}}
$$
\end{defi}

\begin{defi}
A couple ${G}$, ${\overline{G}}$ of each other symmetric $P$
games are also called twins.
\end{defi}

\begin{defi}
A game ${G}$, whose symmetric is just itself, is said self
symmetric or self twin.
\end{defi}

\begin{rem}
Obviously $G$ is self symmetric if and only if $G$ is
bilaterally symmetric. Each $P$ game ${G}$ has just one
symmetric ${\overline{G}}$; a given couple of symmetric games
surely have free type representations with the same number
$(h-1)$ of components.
\end{rem}

In conclusion self twin games are a particular subset of $P$ games characterized by bilateral symmetry of their free representations; in the next section we will derive the cardinality of such subsets as a function of $n$ or respectively of the couple $(n,h)$.

\section{The cardinality of self twin $P$
games}\label{sect:selftwin}

In this section we resume the results on the cardinality of
self twin or bilaterally symmetric games. First of all, using the notation $\Gamma(m)=\Gamma(n-4)$ with $m=0,1,\ldots,n-4$ for the number
of $STP$ games with $n=m+4$ players, it is:
\begin{risu}\label{risu:risu4.1}

\begin{subequations}\label{Fnct:cardin1}
\begin{align}
\Gamma(m)&= 2^{m/2}=2^{(n-4)/2}, \quad \text{m (or n) even}\label{3first}\\
\Gamma(m)&=2^{(m+1)/2}=2^{(n-3)/2}, \quad \text{m (or n) odd}\label{3second}
\end{align}
\end{subequations}
\end{risu}

\begin{rem}\label{rem:rem1}
For $n$ even, this is just the squared root of the overall
number of $n$ person $P$ games. Then for $n=4$ ($m=0$), the only $P$
game is self twin; for $n=6$ ($m=2$) there are four $P$ games, two of
which self twin; for $n=8$ ($m=4$), four self twin out of sixteen $P$
games and so on; the number of self twin games multiplies by
two as we pass from an even $n$ to the next odd $n+1$.
\end{rem}

\begin{proof}
For $n$ even, we have a free binary choice for each component
of the first half of the free binary representation vector,
that is $m/2=(n-4)/2$ components (while of course the second
half is fully constrained by symmetry); for $n$ odd, we can
freely choose the first $(m-1)/2$ components and the pivot,
that is $(m+1)/2=(n-3)/2$.
\end{proof}

As for the number $\Gamma(m,k)=\Gamma(n-4,h-2)$ of  $STP$ games with $n$ players
and $h$ types\footnote{Obviously, $n$ and $m$ and, respectively, $h$ and $k$ have the same parity.}, it is:
\begin{risu}\label{risu:risu4.2}

\begin{subequations}\label{Fnct:cardin2}
\begin{align}
\Gamma(m,k)&=C_{m/2,k/2} \quad \text{m even, k even}\label{4first}\\
\Gamma(m,k)&=0 \quad \text{m even, k odd}\label{4second}\\
\Gamma(m,k)&=C_{(m-1)/2,k/2} \quad \text{m odd, k even}\label{4third}\\
\Gamma(m,k)&=C_{(m-1)/2,(k-1)/2} \quad \text{m odd, k odd}\label{4forth}
\end{align}
\end{subequations}
\end{risu}

\begin{proof}
a) Both $m$ and $k$ even. There is a free choice of
      the $k/2=(h-2)/2$ components equal to 1 among the
      $m/2=(n-4)/2$ places in the first half of the free
      binary representation; by bilateral symmetry there is no free
      choice in the second half.

b) If $m$ even and $k$
      odd. No bilateral symmetry at all may be obtained.

c) If $m$ (and
      then $n-4$) is odd and $k$ even, bilateral symmetry may be obtained
      only if the pivot (median) component of the free
      binary representation is 0. Then, we are left with
      $m-1=n-5$ (free components) and $k=h-2$ (free elements
      equal to 1) both even, and we are brought back,
      keeping account of the symmetry constraint, to the
      result sub a).

d) If both $m$ and $k$ are odd, bilateral symmetry
      may be obtained only if the pivot component of the
      free binary vector is 1. Then both $m-1=n-5$ (free
      components) and $k-1=h-3$ (free elements equal to 1)
      are even and we are brought back again to the result
      sub a).
\end{proof}

\section{$MPT$ in self twin games}\label{sect:MPTinST}

The results of the previous section give rise to a modified
version $\Gamma$ of the classic Pascal triangle, as may be
perceived by Table 1
(on rows $m=n-4$, starting from $m=0$ or $n=4$; on columns
$k=h-2$, going from $k=0$ or $h=2$ to $k=m$ or $h=n-2$), where
you find the number of $STP$ games for combinations of $m$
and $k$.

\begin{table}[H]
  \begin{center}\label{Tabella:Tab1}\scriptsize
    \begin{tabular}{rr|rrrrrrrrrr}
    %\hline
    \multicolumn{12}{c}{$k$}\\
    %\hline
     & & 0    & 1    & 2    & 3   & 4 & 5 & 6 & 7 & 8 & ...\\
    \hline
    \multirow{9}{0.1cm}{\rotatebox{90}{~\parbox{0.5cm}{$m$}~}}   &  0 & 1    &     &     &    &  & & & & \\
       &  1 & 1 & 1 \\
       &  2 & 1 & 0 & 1 \\
       &  3 & 1 & 1 & 1 & 1\\
    &  4 & 1 & 0 & 2 & 0 & 1\\
       &  5 & 1 & 1 & 2 & 2 & 1 & 1\\
       &  6 & 1 & 0 & 3 & 0 & 3 & 0 & 1\\
       &  7 & 1 & 1 & 3 & 3 & 3 & 3 & 1 & 1\\
       &  8 & 1 & 0 & 4 & 0 & 6 & 0 & 4 & 0 & 1\\
       &... & ... & ... & ... & ... & ... & ... & ...& ... &...&... \\
    \hline
  \end{tabular}
\end{center}
\caption{Triangle $\Gamma$ with entries $\Gamma(m,k)$, the
cardinality of $STP$ games with $n=m+4$ players and $h=k+2$
types.}
\end{table}

We wish to underline that the definition of (modified) Pascal
triangle for $\Gamma$ is not unfounded. Indeed the entries of $\Gamma$
are far from being merely a collection of non negative integers
summing up for any row to a power of 2; on the contrary, they
are all binomial coefficients (or zero) whose evolution obeys
almost wholly (with only a slight modification) the basic
recurrent combinatorial rule (Pascal equation) of the classic Pascal triangle
$C(m,k)=C(m-1,k)+C(m-1,k-1)$.

Hereafter the evolutionary rules of the $\Gamma$ triangle.
\begin{risu}\label{risu:risu5.1}

    For any $m$ and for $k=0$ or $k=m$:
      \begin{equation}\label{eqn:new9}
      \Gamma(m,0)=\Gamma(m,m)=1
      \end{equation}

      For $k=1,\ldots,m-1$ and $(m+k)$ even:
          \begin{equation}\label{eqn:new10}
      \Gamma(m,k)=\Gamma(m-1,k)+\Gamma(m-1,k-1)
      \end{equation}

      For $k=1,\ldots,m-1$ and $(m+k)$ odd:
    \begin{equation}\label{eqn:ex54}
    \Gamma(m,k)= \Gamma(m-1,k)-\Gamma(m-1,k-1)
    \end{equation}
\end{risu}

\begin{rem}
In the case $m$ odd, $k$ even, it is $(m-1)$ even and $(k-1)$
odd, and by Formula \eqref{4second} $\Gamma(m-1,k-1)=0$. Hence,
in this case, the Pascal equation would not be really
challenged. Then the rows with odd $m$ go on following the
classic Pascal recurrence equation, which needs to be adapted only for
the case $m$ even, $k$ odd.
\end{rem}

The straightforward but tedious proof of Formulas
\eqref{eqn:new10} and \eqref{eqn:ex54} is given in Appendix
\ref{App:B}. Let us shortly comment here the result
\eqref{eqn:new9}.

For any given $n$ (or equivalently $m=n-4$), there is just one
$P$ game with only two types ($h=k+2=2)$. Obviously it
corresponds to the case in which the free binary representation
of the game is the null vector. This implies that the free type
representation has only one element $x_1=n-1$. Hence, this $P$
game is surely characterized by bilateral symmetry (i.e. it is
self twin). Then $\Gamma(m,0)=1$. We recall that these games,
well known in literature, are called Apex games (see
\cite{Hor73}, 1973).

Symmetrically, for any given $n$, there is just one $P$ game
with $h=n-2$ types. It corresponds to the case in which the
free binary representation of the game is the unit vector. This
implies that the free type representation has $n-3$ elements
(the largest possible, given $n$), with $x_1=x_{n-3}=2$ and all
the other elements $x_t=1$. Hence, also this $P$ game is
characterized by bilateral symmetry (i.e. it is self twin).
Then $\Gamma(m,m)=1$. These games have been introduced by
Isbell in (\cite{Isb56}, p. 185) who underlined that the
sequence of the individual weights is the Fibonacci one, with
the only slight modification that the weight of the vicetop
player is repeated twice or, more formally, denoting by $f_i$ the
sequence of Fibonacci numbers: $w_i=f_i$ for $i=1,\ldots,n-2$,
$w_{n-1}=f_{n-2}$, $w_n=f_{n-1}$. Hence such games may be
called Fibonacci games. For example, the sequence of individual weights of the
Fibonacci game with $n=10$ players is
$\{1,1,2,3,5,8,13,21,21,34\}$.

\vspace{0.5cm}

If we are interested in counting the number of non self twin,
we find another $MPT$, $\Delta$, whose entries (see Table 2)
are obviously defined for any $m,k$ combination by:
\begin{equation}
\Delta(m,k)=C(m,k)-\Gamma(m,k)
\end{equation}

\begin{table}[H]\label{Tabella:Tab2}
  \begin{center}\scriptsize
    \begin{tabular}{rr|rrrrrrrrrr}
    %\hline
    \multicolumn{12}{c}{$k$}\\
    %\hline
     & & 0    & 1    & 2    & 3   & 4 & 5 & 6 & 7 & 8 & ...\\
    \hline
    \multirow{9}{0.1cm}{\rotatebox{90}{~\parbox{0.5cm}{$m$}~}}   &  0 & 0 & & & &  & & & & \\
       &  1 & 0 & 0 \\
       &  2 & 0 & 2 & 0 \\
       &  3 & 0 & 2 & 2 & 0\\
       &  4 & 0 & 4 & 4 & 4 & 0\\
       &  5 & 0 & 4 & 8 & 8 & 4 & 0\\
       &  6 & 0 & 6 & 12 & 20 & 12 & 6 & 0\\
       &  7 & 0 & 6 & 18 & 32 & 32 & 18 & 6 & 0\\
       &  8 & 0 & 8 & 24 & 56 & 64 & 56 & 24 & 8 & 0\\
       &... & ... & ... & ... & ... & ... & ... & ...& ... &...&... \\
    \hline
  \end{tabular}
\end{center}
\caption{Cardinality $\Delta(m,k)=C(m,k)-\Gamma(m,k)$ of non
$STP$ games for any combination of $m,k$.}
\end{table}

It turns out that also $\Delta$ is a $MPT$, whose evolutionary
rules are as follows:
\begin{risu}\label{risu:risu5.2}

        For any $m$ and for $k=0$ or $k=m$:
      \begin{equation}%\label{eqn:}
      \Delta(m,0)=\Delta(m,m)=0
      \end{equation}
    Proof obvious.

      For $k=1,\ldots,m-1$ and $(m+k)$ even:
      \begin{equation}%\label{eqn:}
      \Delta(m,k)=\Delta(m-1,k)+\Delta(m-1,k-1)
      \end{equation}
    Proof.
    \begin{equation}%\label{eqn:}
    \begin{split}
    \Delta(m,k)&=C(m,k)-\Gamma(m,k)=\\
    &=C(m-1,k)+C(m-1,k-1)-(\Gamma(m-1,k)+\Gamma(m-1,k-1))\\
    &=C(m-1,k)-\Gamma(m-1,k)+C(m-1,k-1)-\Gamma(m-1,k-1)\\
    &=\Delta(m-1,k)+\Delta(m-1,k-1)
    \end{split}
    \end{equation}

    For $k=1,\ldots,m-1$ and $(m+k)$ odd:
    \begin{equation}%\label{eqn:}
    \Delta(m,k) = \Delta(m-1,k-1)+\Delta(m-1,k)+2\Gamma(m-1,k-1)
    \end{equation}
    Proof.
    \begin{equation}%\label{eqn:}
    \begin{split}
    \Delta(m,k)&=C(m,k)-\Gamma(m,k)=\\
    &=C(m-1,k)+C(m-1,k-1)-(\Gamma(m-1,k)-\Gamma(m-1,k-1))\\
    &=C(m-1,k)+C(m-1,k-1)-(\Gamma(m-1,k)+\Gamma(m-1,k-1)-2\Gamma(m-1,k-1))\\
    &=C(m-1,k)-\Gamma(m-1,k)+C(m-1,k-1)-\Gamma(m-1,k-1)+2\Gamma(m-1,k-1)\\
    &=\Delta(m-1,k)+\Delta(m-1,k-1)+2\Gamma(m-1,k-1)
    \end{split}
    \end{equation}
\end{risu}

\begin{rem}
Recall that for odd $m$, even $k$ it is $\Gamma(m-1,k-1)=0$.
Once more this implies that in this case the classic Pascal
equation is not challenged, so that it needs to be really
updated only for the combinations even $m$, odd $k$.
\end{rem}

Among other things it turns out that all non zero components of
$\Delta$ are even; this allows to introduce another, associated to $\Delta$,
modified Pascal triangle $\Theta$, whose entries are defined for any
$m,k$ by:
\begin{equation}
    \Theta(m,k)=\Delta(m,k)/2
\end{equation}
which gives the number of pairs of non identical twins to be
added to the identical twins to obtain the overall number of P
games for any combination of $m,k$. See Table 3.

\begin{table}[H]\label{Tabella:Tab3}
  \begin{center}\scriptsize
    \begin{tabular}{rr|rrrrrrrrrr}
    %\hline
    \multicolumn{12}{c}{$k$}\\
    %\hline
     & & 0    & 1    & 2    & 3   & 4 & 5 & 6 & 7 & 8 & ...\\
    \hline
    \multirow{9}{0.1cm}{\rotatebox{90}{~\parbox{0.5cm}{$m$}~}}   &  0 & 0 & & & &  & & & & \\
       &  1 & 0 & 0 \\
       &  2 & 0 & 1 & 0 \\
       &  3 & 0 & 1 & 1 & 0\\
       &  4 & 0 & 2 & 2 & 2 & 0\\
       &  5 & 0 & 2 & 4 & 4 & 2 & 0\\
       &  6 & 0 & 3 & 6 & 10 & 6 & 3 & 0\\
       &  7 & 0 & 3 & 9 & 16 & 16 & 9 & 3 & 0\\
       &  8 & 0 & 4 & 12 & 28 & 32 & 28 & 12 & 4 & 0\\
       &... & ... & ... & ... & ... & ... & ... & ...& ... &...&... \\
    \hline
  \end{tabular}
\end{center}
\caption{Cardinality $\Theta(m,k)=\Delta(m,k)/2$ of pairs of
non identical twins for any combination of $m,k$.}
\end{table}

See section \ref{sect:example} for a presentation of some
examples.

\section{A constructive approach to self twin
games}\label{sect:NewApproachST}

In this section we propound a constructive evolutionary
approach to $STP$ games in which successive generations of
such games (each generation being associated to a value of $m$)
are generated by $STP$ games of the previous generation
through the application of simple rules. The base of this
approach is the free type representation of a $P$ game
$_f\x=(x_1, x_2,\ldots,x_{h-1}$) introduced in the final
part of section \ref{Sect:Pgamesalt}, with  $STP$ games
characterized by bilateral symmetry of their free type
representation. We discuss this approach starting from the
following picture \ref{Fig:GenealTree}, which gives the initial
evolution of $STP$ games (described by the above
representation), starting from the seed, the ``Adamo'' game
with representation (3), that is the only $P$ game with 4 players (which is $STP$).

\begin{figure}[H]
\begin{center}
\includegraphics[type=eps,ext=.eps,read=.eps,width=11cm]{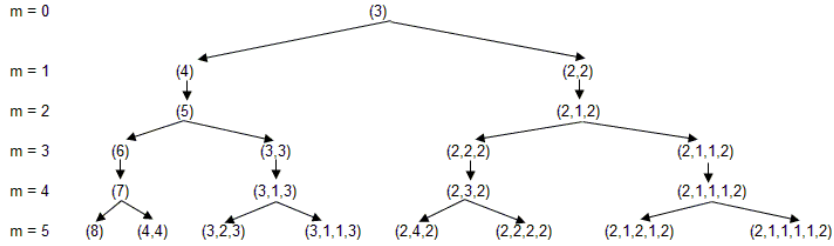} \qquad
\caption{Genealogical tree of self twin games (rows $m$
from 0 to 5 or $n$ from 4 to 9) expressed by their free type
representation.} \label{Fig:GenealTree}
\end{center}
\end{figure}

Let us give here rather informally the evolutionary rules that
characterize the tree.

\begin{prop}\label{prop:0}
The seed of the genealogical tree (the generation zero game) is
the $STP$ ``Adamo'' four person game, whose free type
representation is 3.
\end{prop}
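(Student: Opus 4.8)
The plan is to verify Proposition \ref{prop:0} directly by exhibiting the unique $P$ game with $n=4$ players and confirming that it is self twin. Since the proposition merely asserts that the seed (generation-zero) game is the four-person $STP$ game with free type representation $3$, the proof is essentially a base-case check, and I expect no real obstacle beyond carefully matching the various representations introduced in Section \ref{sect:classicP}.

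First I would appeal to the parametrization $n=m+4$, so that the generation-zero game corresponds to $m=0$, i.e. $n=4$. By the Result stating that there are exactly $2^m$ $P$ games with $n=m+4$ players, setting $m=0$ gives $2^0=1$, so there is a \emph{unique} $P$ game on four players. This immediately justifies calling it ``the'' seed. Next I would pin down its representations: the free binary representation $_f\b=(b_3,\ldots,b_{n-2})$ has dimension $n-4=0$, hence is the empty (null) vector; using the forced constraints $b_1=1,\,b_2=0,\,b_{n-1}=0,\,b_n=1$ from Result \ref{Res:ris3}, the full binary vector is $\b=(1,0,0,1)$. Counting the 1's shows there are $h=2$ types, consistent with $h=k+2$ at $k=0$.

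Then I would translate to the type representation. With $h=2$, the constraints $x_h=1$ and $x_1\ge 2$ together with $x_1=n-1$ (the Remark for $h=2$) give $\x=(x_1,x_2)=(3,1)$, so the free type representation $_f\x=(x_1,\ldots,x_{h-1})=(x_1)=(3)$. This is precisely the representation $3$ claimed in the statement. Finally, bilateral symmetry is trivial: a one-component free type vector automatically satisfies $x_t=x_{h-t}$ (here $x_1=x_{h-1}=x_1$), so the game is $BSP$, equivalently self twin, confirming it is indeed an $STP$ game and a legitimate seed.

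The only point requiring any care—and the nearest thing to an ``obstacle''—is checking that this degenerate case genuinely satisfies every definition rather than vacuously evading them: one must confirm that the empty free binary vector corresponds to an admissible $P$ game and that the symmetry condition is non-vacuously met. Both follow at once, since $\Gamma(0)=2^{0/2}=1$ by \eqref{3first} and $\Gamma(0,0)=C_{0,0}=1$ by \eqref{4first}, so the unique four-person $P$ game is counted as self twin. This closes the proof.
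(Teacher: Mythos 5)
Your proposal is correct and matches the paper's own treatment: the paper handles this as an immediate base case, observing that $2^{m}=2^{0}=1$ gives a unique $P$ game for $n=4$ (the Apex game with null free binary vector, hence $_f\x=(n-1)=(3)$), whose one-component free type representation is trivially bilaterally symmetric, so it is $STP$. Your additional checks via $\Gamma(0)=1$ and $\Gamma(0,0)=C_{0,0}=1$ are consistent with, and essentially the same as, the verification the paper points to in Remark \ref{rem:rem1} and the discussion of Formula \eqref{eqn:new9}.
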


\begin{defi} A $STP$ game with an odd (respectively an even)
number of components in its free type representation $_f\x$ is
called odd (even) self twin parsimonious, or shortly $OSTP$
($ESTP$) game.
\end{defi}

\begin{prop}\label{prop:1}
All STP games of zero or even generation (m zero or even), are
OSTP i.e. they have $h-1$ odd.
\end{prop}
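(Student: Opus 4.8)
The plan is to reduce the statement to a parity fact about the number of types and then settle that fact by a short symmetry count (a one-line shortcut is also available from Result \ref{risu:risu4.2}). First I would translate the terminology. A generation is indexed by $m$, so ``zero or even generation'' means $m$ even. A game is $OSTP$ exactly when its free type representation $_f\x$ has an odd number $h-1$ of components, i.e. when $h$ is even; and since $h=k+2$, this is the same as $k$ being even. Thus the whole claim is equivalent to the assertion that every $STP$ game with $m$ even has $k$ even.

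Next I would recall the bridge between the type count and the binary representation. The number of types $h$ equals the number of unit components of $\b=(b_1,\ldots,b_n)$; since in any $P$ game $b_1=b_n=1$ and $b_2=b_{n-1}=0$ by Result \ref{Res:ris3}, we have $h=2+k$, where $k$ is the number of $1$'s in the free binary representation $_f\b=(b_3,\ldots,b_{n-2})$, a vector of length $m$. For an $STP$ game $\b$ is bilaterally symmetric (by the bilateral symmetry condition of Definition \ref{risu:risu1} together with Proposition \ref{prop:3.1}), so $b_i=b_{n+1-i}$ throughout.

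The core step is counting the $1$'s under this symmetry. When $m$ is even, $n=m+4$ is even, so the involution $i\mapsto n+1-i$ on the index range $\{3,\ldots,n-2\}$ has no fixed point, because the would-be centre $(n+1)/2=(m+5)/2$ is not an integer. Hence the $m$ entries of $_f\b$ split into exactly $m/2$ mirror pairs, each pair carrying equal values, so every $1$ is matched by another $1$ and $k$ is even. Therefore $h=k+2$ is even, $h-1$ is odd, and the game is $OSTP$. The degenerate case $m=0$ is the ``Adamo'' game, where $_f\b$ is empty, $k=0$, $h=2$ and $h-1=1$, consistent with the claim.

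I expect no serious obstacle, since the argument is a parity count; the only point needing care is the bookkeeping that $m$ even forces an even number of symmetric pairs with \emph{no} central pivot. This is exactly what distinguishes even from odd generations: for odd $m$ the index $(m+5)/2$ is an integer lying in $\{3,\ldots,n-2\}$, giving a free pivot whose value may be $0$ or $1$, so both parities of $k$ arise. Finally, as a shortcut one may bypass the symmetry count and invoke Formula \eqref{4second}: for $m$ even and $k$ odd one has $\Gamma(m,k)=0$, so no $STP$ game has $m$ even and $k$ odd, whence every $STP$ game with $m$ even has $k$ even and is $OSTP$.
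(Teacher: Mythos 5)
Your proof is correct and follows essentially the paper's own route: the paper settles this case through Formula \eqref{4second} of Result \ref{risu:risu4.2}, whose proof is exactly your parity argument that for $m$ even the mirror condition $b_i=b_{n+1-i}$ pairs the $m$ free entries of $_f\b$ with no fixed point (the centre $(n+1)/2$ is not an integer), forcing the number $k$ of $1$'s to be even and hence $h-1=k+1$ odd. Your explicit fixed-point-free involution count simply spells out what the paper states tersely as ``no bilateral symmetry at all may be obtained'' when $m$ is even and $k$ is odd, and your closing shortcut via $\Gamma(m,k)=0$ is the same observation invoked directly.
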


\begin{prop}\label{prop:2}
Half of the STP games of any odd generation (m odd) are
OSTP (and obviously the other half ESTP).
\end{prop}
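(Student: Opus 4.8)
The plan is to exploit the pivot structure of bilaterally symmetric free binary representations when $m$ is odd, exactly as in the proof of Result \ref{risu:risu4.1}. First I would recall that, for $m$ odd, a bilaterally symmetric $_f\b=(b_3,\ldots,b_{n-2})$ is completely determined by the free choice of its first $(m-1)/2$ components together with the free choice of its pivot (median) component $b_{(n+1)/2}$, the remaining $(m-1)/2$ components being forced by the symmetry condition $b_i=b_{n+1-i}$. This already accounts for $\Gamma(m)=2^{(m-1)/2}\cdot 2=2^{(m+1)/2}$, in agreement with \eqref{3second}.

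The key step is to translate the $OSTP/ESTP$ dichotomy into a condition on the pivot. Since the number of types obeys $h=k+2$ with $k$ equal to the number of ones in $_f\b$ (the fixed entries contribute $b_1=b_n=1$ and $b_2=b_{n-1}=0$), a game is $OSTP$, i.e. $h-1=k+1$ odd, precisely when $k$ is even, and $ESTP$ precisely when $k$ is odd. Now in a symmetric $_f\b$ the off-pivot ones occur in matched pairs $b_i=b_{n+1-i}$ with $i\neq(n+1)/2$, each pair contributing $2$ to the count, so the parity of $k$ coincides with the pivot value: $k$ is even iff $b_{(n+1)/2}=0$ and odd iff $b_{(n+1)/2}=1$. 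Hence the $OSTP$ games are exactly the symmetric games with pivot $0$ and the $ESTP$ games exactly those with pivot $1$, recovering parts c) and d) of the proof of Result \ref{risu:risu4.2}.

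Finally I would exhibit the obvious bijection between the two families: the map that leaves the first $(m-1)/2$ free components untouched and flips the pivot from $0$ to $1$ carries each $OSTP$ game to an $ESTP$ game and is its own inverse, hence a bijection of the whole set onto itself swapping the two subsets. It follows that $OSTP$ and $ESTP$ games are equinumerous, each subset having cardinality $2^{(m-1)/2}=\tfrac12\,\Gamma(m)$, which is precisely the assertion. As an independent check one may instead sum the explicit values of Result \ref{risu:risu4.2}: writing $p=(m-1)/2$, the $OSTP$ count is $\sum_{j}C_{p,j}$ (with $k=2j$) and the $ESTP$ count is $\sum_{j}C_{p,j}$ (with $k=2j+1$), both equal to $2^{p}$ by the binomial theorem.

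The only delicate point is the parity bookkeeping of the second paragraph, namely verifying that bilateral symmetry forces the off-pivot ones to pair up so that $k$ and the pivot always share parity; but this is immediate once the involution $b_i\leftrightarrow b_{n+1-i}$ on the index set $\{3,\ldots,n-2\}$ is written down explicitly, its unique fixed index being the pivot $i=(n+1)/2$.
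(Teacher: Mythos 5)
Your proof is correct, but it proceeds along a genuinely different route from the paper's. The paper never argues the half-and-half split intrinsically: Proposition \ref{prop:2} is there a by-product of the genealogical tree, namely of the breeding rules (Propositions \ref{prop:5}--\ref{prop:7}) by which each even-generation $OSTP$ parent begets exactly one $OSTP$ child $G^{1}$ and one $ESTP$ child $G^{2}$, with exhaustiveness of the tree secured only afterwards (Proposition \ref{prop:13}) by comparing cardinalities with Result \ref{risu:risu4.1}. You instead work directly inside the bilaterally symmetric free binary vectors: your observation that the off-pivot ones of a symmetric $_f\b$ pair up under the involution $i\leftrightarrow n+1-i$, so that the parity of $k$ equals the median bit $b_{(n+1)/2}$, gives an intrinsic characterization ($OSTP$ iff the median bit is $0$), and the pivot-flip map is then a self-inverse bijection swapping the two classes. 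This buys you something the paper's argument does not: the split into equal halves of size $2^{(m-1)/2}$ is established without any appeal to the tree's completeness, and your closing binomial-sum check, $\sum_j C_{(m-1)/2,j}=2^{(m-1)/2}$ for each parity class of $k$, is exactly the computation implicit in Result \ref{risu:risu4.2} cases c) and d), so your argument doubles as an independent verification of those cases. What the paper's route buys in exchange is constructive content --- it exhibits which games are which via parent--child relations rather than merely counting them. One caution on terminology: your ``pivot'' is the median component of $_f\b$, whereas the paper reserves ``pivot'' for the central component $x_{h/2}$ of the free type representation of an $OSTP$ game (Sections \ref{sect:NewApproachST} and \ref{sect:Pivot}); these are different objects (your binary pivot distinguishes $OSTP$ from $ESTP$, the paper's type pivot lives only inside $OSTP$ games), so if your text were merged into the paper the word should be renamed, e.g.\ ``median bit,'' to avoid a clash.
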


\begin{defi} The central component $\x_{h/2}$ of the free type
representation of an OSTP is called pivot of the representation
(or shortly of the game).
\end{defi}

\begin{prop}\label{prop:3}
The pivot of an OSTP of any (zero or) even generation is an
odd number.
\end{prop}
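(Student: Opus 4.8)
The plan is to reduce the whole statement to a single parity count on the sum of the free type components, so essentially no computation is needed. First I would use Proposition~\ref{prop:1} to guarantee that a zero- or even-generation $STP$ game really is $OSTP$: since $m$ is even the number of free type components $h-1$ is odd, hence the central index is exactly $h/2$ and the pivot $x_{h/2}$ is well defined. In particular $h/2$ is its own mirror image, because $h-h/2=h/2$, so the pivot is the unique component left unpaired by the symmetry relation.

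Next I would write down the elementary player-counting identity. The complete type vector satisfies $\sum_{t=1}^{h}x_t=n$, and the obligatory last component is $x_h=1$ (the constraint on $\x$ recorded earlier in Section~\ref{Sect:Pgamesalt}). Subtracting, the free type components sum to
$$\sum_{t=1}^{h-1}x_t=n-1=m+3.$$

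The third step is to exploit bilateral symmetry of $_f\x$, namely $x_t=x_{h-t}$ for every $t=1,\ldots,h-1$. Since $h-1$ is odd, all components except the central one split into equal pairs $(x_t,x_{h-t})$, leaving the pivot alone, so that
$$\sum_{t=1}^{h-1}x_t=2\sum_{t=1}^{h/2-1}x_t+x_{h/2}.$$

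Finally I would compare the two expressions modulo $2$: with $m$ even the total $m+3$ is odd, while the paired contribution $2\sum_{t=1}^{h/2-1}x_t$ is even, forcing $x_{h/2}$ to be odd, which is exactly the claim. The only point demanding care — and it is pure bookkeeping rather than a genuine obstacle — is verifying that, when $h-1$ is odd, the single unpaired index is precisely $h/2$ and the remaining $h-2$ indices form $h/2-1$ symmetric pairs; once this indexing is pinned down the parity argument is immediate. As a sanity check, the seed ``Adamo'' game has $_f\x=(3)$ with pivot $3$, and the Apex game at $m=2$ has $_f\x=(5)$ with pivot $5$, both odd.
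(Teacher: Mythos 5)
Your proof is correct, and it takes a genuinely different route from the paper. The paper never argues this proposition directly: it is presented as one of the descriptive rules of the genealogical tree, justified implicitly by induction along the breeding rules --- the seed ``Adamo'' game has pivot $3$, and every even-generation pivot arises either as a previous even-generation pivot increased by $2$ (via Prop.~\ref{prop:6} and Prop.~\ref{prop:8}, the chain $x_{h/2}\mapsto x_{h/2}+1\mapsto x_{h/2}+2$) or as the freshly inserted pivot $1$ (Prop.~\ref{prop:9}), so oddness propagates from generation $m$ to generation $m+2$; this is exactly the mechanism restated in Section~\ref{sect:Pivot} for the even pivot triangle. Your argument instead is a direct, construction-free parity count: from $\sum_{t=1}^{h}x_t=n$ and $x_h=1$ you get $\sum_{t=1}^{h-1}x_t=m+3$, and bilateral symmetry pairs off all free components except the central one, so $x_{h/2}\equiv m+3 \pmod 2$, which is odd for $m$ even. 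Your indexing bookkeeping is right: $h-1$ odd makes $h$ even, $t=h-t$ forces $t=h/2$, and the remaining $h-2$ indices form $h/2-1$ symmetric pairs (the empty-sum case $h=2$ covered). What your approach buys is independence from the tree: it applies to \emph{any} bilaterally symmetric $P$ game with $m$ even, without invoking the exhaustiveness claims (Props.~\ref{prop:10}--\ref{prop:13}), and the identical computation with $m$ odd ($m+3$ even) delivers Prop.~\ref{prop:4} for free. What the paper's inductive route buys, by contrast, is finer information than parity alone --- it tracks the actual pivot values and their repetitions, which is what produces the pivot triangles of Section~\ref{sect:Pivot}.
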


\begin{prop}\label{prop:4}
The pivot of an OSTP of any odd generation is an even
number.
\end{prop}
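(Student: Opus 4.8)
The plan is to read off the parity of the pivot directly from the global player-count constraint $\sum_{t=1}^{h} x_t = n$ combined with bilateral symmetry of the type representation. For an $OSTP$ game the free type vector $_f\x$ has an odd number $h-1$ of components, so $h$ is even and the pivot is the genuinely central entry $x_{h/2}$, namely the unique component left unpaired by the symmetry $x_t = x_{h-t}$.

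First I would record the two structural facts already available for a self twin game: bilateral symmetry gives $x_t = x_{h-t}$ for $t = 1, \ldots, h-1$, while the obliged last component is $x_h = 1$. Next I would split the total player count according to these pairings. Since $h$ is even, the indices $1, \ldots, h-1$ decompose into the $h/2-1$ symmetric pairs $\{t, h-t\}$, $t = 1, \ldots, h/2-1$, together with the single central index $h/2$; each pair contributes $2x_t$, the pivot contributes $x_{h/2}$, and the top type contributes $x_h = 1$. Summing,
\begin{equation}
n = \sum_{t=1}^{h} x_t = 1 + x_{h/2} + 2\sum_{t=1}^{h/2-1} x_t ,
\end{equation}
so that $x_{h/2} = (n-1) - 2\sum_{t=1}^{h/2-1} x_t$ has the same parity as $n-1$. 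For an odd generation $m = n-4$ is odd, hence $n$ is odd and $n-1$ even, so the pivot $x_{h/2}$ is even, as claimed.

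The argument is deliberately parallel to the even-generation case of Proposition \ref{prop:3}, which it subsumes: for $m$ even $n$ is even, $n-1$ is odd, and the same identity forces an odd pivot. The only point needing care --- and the main, if modest, obstacle --- is the bookkeeping that singles out $x_{h/2}$ as the exact fixed point of the involution $t \mapsto h-t$ and confirms it is the sole unpaired term once $x_h = 1$ is set aside; after that the conclusion is purely the parity of $n-1$. The same result also drops out of the binary picture: writing $p_1 < \cdots < p_h$ for the positions carrying $b=1$, symmetry of $\b$ yields $p_{h/2+1} = n+1 - p_{h/2}$, whence the pivot type size equals $x_{h/2} = p_{h/2+1} - p_{h/2} = (n+1) - 2p_{h/2}$, again of the same parity as $n-1$.
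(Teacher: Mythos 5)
Your argument is correct, but it follows a genuinely different route from the paper's. The paper states Proposition \ref{prop:4} as one of the evolutionary rules of the genealogical tree and justifies it constructively and generationally: by Propositions \ref{prop:5}--\ref{prop:9}, every odd-generation OSTP arises as a child $G^{1}$ of an even-generation OSTP whose pivot $x_{h/2}$ is odd (Proposition \ref{prop:3}), and the breeding rule of Proposition \ref{prop:6} sets the child's pivot to $x_{h/2}+1$, hence even; strictly speaking, this covers \emph{all} odd-generation OSTP games only once the tree is known to exhaust them (Propositions \ref{prop:11}--\ref{prop:14}). You bypass the tree entirely: from $x_h=1$, bilateral symmetry $x_t=x_{h-t}$, and the observation that for an OSTP $h$ is even so the involution $t\mapsto h-t$ on $\{1,\ldots,h-1\}$ fixes exactly $t=h/2$, the player count yields $n=1+x_{h/2}+2\sum_{t=1}^{h/2-1}x_t$, whence $x_{h/2}\equiv n-1 \pmod 2$; with $m=n-4$ odd the pivot is even. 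This static parity argument is self-contained, applies to any bilaterally symmetric game regardless of how it was generated (so it carries no hidden dependence on the construction's completeness), and, as you note, delivers Proposition \ref{prop:3} simultaneously from the same identity; your binary-representation cross-check, $x_{h/2}=I_{h/2+1}-I_{h/2}=(n+1)-2I_{h/2}$ via $I_t+I_{h+1-t}=n+1$, is exactly Property \ref{pr:A1} of Appendix \ref{App:A} and is sound. What the paper's generational derivation buys in exchange is dynamical information your computation does not provide: it exhibits the mechanism (pivot incremented by one at each step, or reset to $1$ by insertion) through which the parity alternates between consecutive generations, which is the engine of the construction in Section \ref{sect:NewApproachST}.
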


\begin{prop}\label{prop:5}
An OSTP $G$ of any even generation, with pivot $\x_{h/2}$ gives
birth to a couple of children STP games of the next (odd)
generation, let us denote them by $G^{1}$ and $G^{2}$. $G^{1}$
is OSTP and $G^{2}$ is ESTP.
\end{prop}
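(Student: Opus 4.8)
The plan is to argue entirely through the free binary representation, where ``self--twin'' is synonymous with ``palindromic,'' and only at the end to translate the construction back into the free type language in which the proposition is phrased. By Proposition \ref{prop:3.1} together with the definition of $BSP$, a $P$ game of generation $m$ is $STP$ exactly when its free binary vector $_f\b=(b_3,\ldots,b_{n-2})$, of length $m$, is a palindrome. For even $m$ such a palindrome has no central entry and is completely determined by its first half $(b_3,\ldots,b_{m/2+2})$. Passing to generation $m+1$ lengthens the free binary vector by one: the palindrome now has odd length $m+1$ and hence acquires a single unpaired central (pivot) entry, while its first half is unchanged. Thus every even--generation $STP$ game $G$ has exactly two $STP$ descendants of generation $m+1$, obtained by inserting a central bit $p\in\{0,1\}$ into the middle of $_f\b$; inserting $p$ preserves the palindrome property, so both descendants are genuinely $STP$, and the boundary constraints of Result \ref{Res:ris3} (the two outermost $1$'s and the flanking $0$'s) are untouched because only the interior is altered.

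First I would pin down which descendant is $OSTP$ and which is $ESTP$ by a parity count. The number of types equals the number of $1$'s in the full binary vector $\b$, and inserting the central bit $p$ changes this count by exactly $p$, since the inserted entry is the unique unpaired position of the new palindrome; hence the descendant's type number is $h'=h+p$. Because $G$ is $OSTP$, its type number $h$ is even ($h-1$ odd). Therefore the descendant with $p=0$ has $h'=h$ types, still even, and is $OSTP$; this is $G^{1}$. The descendant with $p=1$ has $h'=h+1$ types, now odd, and is $ESTP$; this is $G^{2}$. This already establishes the statement at the level of binary representations.

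Next I would read off the effect on the free type representation, using the correspondence that the $1$'s of $\b$ mark the first player of each type, so that $x_t$ is the gap between consecutive $1$'s. Inserting a central $0$ ($G^{1}$) widens the gap straddling the centre, leaving $h$ unchanged and raising the central component by one: $x_{h/2}\mapsto x_{h/2}+1$; since by Proposition \ref{prop:3} the parent pivot is odd, the child pivot $x_{h/2}+1$ is even, in agreement with Proposition \ref{prop:4}. Inserting a central $1$ ($G^{2}$) creates a new type boundary that splits the central block symmetrically, replacing the odd pivot $p=x_{h/2}$ by the pair $\bigl(\tfrac{p+1}{2},\tfrac{p+1}{2}\bigr)$ and raising the type count to $h+1$. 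Both operations manifestly keep the free type vector a palindrome, hence both children are indeed $STP$.

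Finally I would discharge the admissibility constraints $x_1\ge 2$, $x_{h-1}\ge 2$, $x_h=1$. For $G^{1}$ the outer components are inherited from $G$ and the obliged top component is unaffected, so nothing can fail. For $G^{2}$ the only delicate case is $h=2$, where the pivot is the single free component $x_1=n-1=m+3$; here the split produces $x_1=\tfrac{m+4}{2}\ge 2$ because $m\ge 0$ is even, so the constraint survives. The main obstacle, and the only place demanding care, is precisely this dictionary between the binary pivot insertion and its type--level image: one must verify that a central $0$ enlarges exactly the pivot block (rather than spawning a new type) while a central $1$ splits it into two equal halves, and check the small $h=2$ boundary case. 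Once that correspondence is secured, the parity argument does all the work and the proposition follows.
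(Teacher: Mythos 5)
Your proof is correct, but it takes a genuinely different route from the paper. The paper never really proves Proposition \ref{prop:5} as a theorem: Section \ref{sect:NewApproachST} states the breeding rules informally in the free type language (Propositions \ref{prop:6} and \ref{prop:7} simply define $G^{1}$ and $G^{2}$), notes that games so built are bilaterally symmetric by construction (Prop. \ref{prop:10}), and certifies exhaustiveness only a posteriori by comparing cardinalities with Result \ref{risu:risu4.1} (Prop. \ref{prop:13}); the OSTP/ESTP classification is read off directly from the type-level rules, since replicating the parent with pivot $x_{h/2}+1$ preserves the odd number $h-1$ of components while splitting the pivot raises it by one. You instead work where the paper only counts, namely in the free binary representation: an even-generation $STP$ game is an even-length palindrome, its generation-$(m+1)$ successors are exactly the two central-bit insertions $p\in\{0,1\}$, and the parity classification follows from the fact that $h$ equals the number of $1$'s in $\b$, so $h'=h+p$ with $h$ even. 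This buys you several things the paper only asserts: ``exactly two children'' (hence Prop. \ref{prop:11}) comes for free, the children's symmetry never needs re-checking, and the integrality of the split $(x_{h/2}+1)/2$ --- equivalently Prop. \ref{prop:3} --- is automatic, because Property \ref{pr:A1} with $t=h/2$ gives $I_{h/2}+I_{h/2+1}=n+1$, forcing the central gap $x_{h/2}=n+1-2I_{h/2}$ to be odd. The price is the dictionary you yourself flag as the delicate step: that the gap straddling the centre is precisely the pivot block, and that a central $1$ splits it into two equal integer halves. You assert this rather than verify it, but it is a two-line computation with the $\I(\b)$ correspondence of Appendix \ref{App:A} (the flanking ones sit at $I_{h/2}$ and $I_{h/2+1}=n+1-I_{h/2}$, symmetric about the centre, so the two new gaps both equal $I_{h/2+1}-n/2=(x_{h/2}+1)/2$), and your $h=2$ boundary check, including the degenerate $m=0$ case of an empty free binary vector, is exactly the right one. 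One notational caution: you use $p$ both for the inserted bit and for the pivot value in your third paragraph; keep the two symbols distinct.
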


\begin{prop}\label{prop:6}
The child $G^{1}$ replicates the representation of the parent
$G$ except for the pivot which is now (coherently with prop.
\ref{prop:4}) surely an even number $\x^{1}_{h/2}=\x_{h/2}+1$.
\end{prop}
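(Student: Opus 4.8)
The plan is to descend to the \emph{free binary} representation, where by Proposition \ref{prop:3.1} bilateral symmetry means simply that the full vector $\b$ is a palindrome, and to read the breeding rule of Proposition \ref{prop:5} there. Passing from generation $m$ to $m+1$ enlarges $\b$ from length $n$ to length $n+1$; for even $m$ the vector $\b$ has even length $n$ and is palindromic, so the two children are exactly the two palindromes of length $n+1$ obtained by inserting a central bit $c\in\{0,1\}$ at the new midpoint. Inserting $c=1$ creates one more $1$ in $\b$, hence raises $h$ by one and yields an $_f\x$ of even length: this is the $ESTP$ child $G^{2}$. Inserting $c=0$ leaves the number of $1$'s, hence $h$ and the length $h-1$ of $_f\x$, unchanged: this is the $OSTP$ child $G^{1}$. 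Thus the entire content of Proposition \ref{prop:6} reduces to computing the type vector of the parent palindrome after a central $0$ has been inserted.

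First I would encode the type sizes through the positions $1=i_1<i_2<\cdots<i_h=n$ of the $1$'s of $\b$, so that $x_t=i_{t+1}-i_t$ for $t<h$ and $x_h=1$. Palindromicity of $\b$ is the statement $i_t+i_{h+1-t}=n+1$ for all $t$, which both re-proves $x_t=x_{h-t}$ and, taken at the central type $s=h/2$ (an integer because $G$ is $OSTP$), gives $i_s+i_{s+1}=n+1$. Hence $i_s\le n/2<n/2+1\le i_{s+1}$: the block of the pivot type straddles the midpoint, and the pivot $x_{h/2}=i_{s+1}-i_s=2i_{s+1}-(n+1)$ is odd, in accordance with Proposition \ref{prop:3}. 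Now insert $c=0$ at position $n/2+1$: every $1$ to the left of the midpoint keeps its index, every $1$ to the right is shifted by $+1$, and no new $1$ is created. Recomputing the gaps, each $x_t$ with $t\neq s$ is a difference of two indices lying on the same side of the midpoint and is therefore unchanged, while the single straddling gap becomes $x_{h/2}'=(i_{s+1}+1)-i_s=x_{h/2}+1$; one also checks $x_h'=1$. This gives precisely $_f\x^{1}=(x_1,\ldots,x_{h/2}+1,\ldots,x_{h-1})$, with new pivot $x_{h/2}+1$ even, matching Proposition \ref{prop:4}.

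As an independent check at the type level alone, $G^{1}$ must be an $OSTP$ game of the next generation with the \emph{same} number $h-1$ of components (because $c=0$ does not change $h$) and with $\sum_t x_t$ raised by exactly $1$ (since $n$ grows by $1$). A palindrome of fixed odd length whose entry-sum rises by one can do so only by incrementing its central entry, since raising any off-centre entry would force, by symmetry, the simultaneous increment of its mirror and thus add $2$ to the sum. The main obstacle is therefore not the algebra but the bookkeeping of the midpoint: one must verify that the central type is the unique type whose block of players contains the inserted position, so that exactly one gap grows. Once $i_s+i_{s+1}=n+1$ is in hand this is immediate, and the count is consistent, since the pivot-increment map is a bijection between the $\Gamma(m,k)=C_{m/2,k/2}$ even-generation $OSTP$ games and the $\Gamma(m+1,k)=C_{m/2,k/2}$ odd-generation $OSTP$ games in the same column $k$.
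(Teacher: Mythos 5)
Your proof is correct, but it takes a genuinely different route from the paper. The paper never proves Proposition \ref{prop:6} as a free-standing theorem: the breeding rules (Propositions \ref{prop:5}--\ref{prop:9}) are laid down at the level of the free type representation essentially as \emph{definitions} of the tree, the bilateral symmetry of the offspring is asserted ``by construction'' (Proposition \ref{prop:10} --- indeed a one-line check, since incrementing the central entry of an odd-length palindromic $_f\x$ keeps it palindromic, raises $\sum_t x_t$ and hence $n$ by one, and flips the pivot's parity as Proposition \ref{prop:4} requires), and completeness of the construction is then secured globally by comparing the tree's population counts with the closed formulas of Result \ref{risu:risu4.1} (Propositions \ref{prop:11}--\ref{prop:13}). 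You instead work bottom-up in the binary representation, reusing precisely the index machinery of Appendix \ref{App:A} ($\I(\b)$, $x_t=I_{t+1}-I_t$, and the palindrome condition $I_t+I_{h+1-t}=n+1$): you give parenthood a canonical meaning for even $m$ --- deletion/insertion of the central bit of the palindrome $\b$ --- and then \emph{derive} the pivot rule $x^{1}_{h/2}=x_{h/2}+1$ from the $c=0$ insertion, obtaining the split rule of Proposition \ref{prop:7} in passing from $c=1$. This buys something the paper's treatment does not provide locally: since every palindrome of odd length $n+1$ arises uniquely by central insertion into a palindrome of length $n$, exhaustion of the odd generations follows automatically from your bijection, with no appeal to the cardinality formulas, whereas the paper's lighter route makes completeness rest entirely on the counting argument. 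Your midpoint bookkeeping, $I_s\le n/2<n/2+1\le I_{s+1}$ deduced from $I_s+I_{s+1}=n+1$ at $s=h/2$, is exactly the step that needs care (it guarantees that only the straddling gap grows), and you handle it correctly; it also re-proves Proposition \ref{prop:3} on the way. One small caveat: your concluding ``independent check'' is only heuristic as stated, because it tacitly assumes the child differs from the parent by increments alone; parity of the entry-sum of an odd-length palindrome forces only that the central entry change by an odd amount, not by exactly $+1$. Since your main insertion argument is self-contained, this does not affect correctness.
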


\begin{prop}\label{prop:7}
The child $G^{2}$ is obtained by an almost full replication of
the brother $G^{1}$ but (coherently with prop. \ref{prop:2})
splitting the pivot $\x_{h/2}+1$ into a couple of equal central
components $(\x_{h/2}+1)/2$.
\end{prop}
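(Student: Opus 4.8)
The plan is to argue entirely in the free type representation, using the fact (proved in Appendix~\ref{App:A}) that a $P$ game is $STP$ exactly when its $_f\x$ is a palindrome, $x_t=x_{h-t}$ for $t=1,\dots,h-1$. Let the parent $G$ be an even-generation $OSTP$ game: $m$ is even, $_f\x=(x_1,\dots,x_{h-1})$ has the odd number $h-1$ of components, its pivot $x_{h/2}$ is odd by Proposition~\ref{prop:3}, and by Proposition~\ref{prop:6} the brother $G^1$ agrees with $G$ everywhere except at the pivot, which has become $x_{h/2}+1$. I would then define $G^2$ as in the statement --- copy $G^1$ but replace the central entry $x_{h/2}+1$ by the adjacent pair $\bigl((x_{h/2}+1)/2,\,(x_{h/2}+1)/2\bigr)$ --- and show that $G^2$ is a well-defined bilaterally symmetric $P$ game lying in the next (odd) generation with an even number of free-type components, i.e. the $ESTP$ child announced in Proposition~\ref{prop:5}.

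First I would pin down why this split is \emph{the} remaining child, using the binary picture, where the breeding step is transparent. Since $m$ is even, $_f\b$ is a palindrome of even length with no median bit, and a symmetric child of $G$ in generation $m+1$ is obtained by retaining the parent's half and inserting a new median bit (cf. the pivot analysis in the proof of Result~\ref{risu:risu4.2}), leaving exactly the two possibilities of Proposition~\ref{prop:5}. Inserting a $0$ merely prolongs the central run of equal weights, i.e. lengthens the central type to $x_{h/2}+1$: this is $G^1$. Inserting a $1$ instead opens a new type within that run and splits it; because the parent's central run has odd length $x_{h/2}$ and lies symmetrically about the centre of $\b$, the median $1$ cuts the resulting block of length $x_{h/2}+1$ into two equal palindromic halves, each of length $(x_{h/2}+1)/2$. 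Translated back to the type representation this is exactly the operation of the statement, so $G^2$ is the $ESTP$ child; equivalently $G^2$ is obtained from $G^1$ by turning the middle $0$ of $G^1$'s central run into a $1$.

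Next I would run the four verifications. Integrality is the crux: $(x_{h/2}+1)/2$ is an integer precisely because $x_{h/2}$ is odd (Proposition~\ref{prop:3}) --- this is the whole point of having first incremented the pivot to an even value --- and it is at least $1$ since $x_{h/2}\ge 1$. Symmetry is a direct check: writing $_f\x^{(2)}=(x'_1,\dots,x'_h)$ with $x'_j=x_j$ for $j\le h/2-1$, $x'_{h/2}=x'_{h/2+1}=(x_{h/2}+1)/2$ and $x'_j=x_{j-1}$ for $j\ge h/2+2$, the parent palindromy $x_t=x_{h-t}$ yields $x'_t=x'_{h+1-t}$ for every $t$, so $_f\x^{(2)}$ is a palindrome and $G^2$ is $STP$. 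Counting then gives $h+1$ types, hence $h$ free-type components (even), so $G^2$ is $ESTP$, and a player total of $n+1$, placing $G^2$ in generation $m+1$ at the cell $(m+1,k+1)$ with both indices odd, consistent with \eqref{4forth} and with Proposition~\ref{prop:2}. Finally the $P$-game boundary constraints survive: $x'_h=x_{h-1}=x_1\ge 2$, and $x'_1=x_1\ge 2$ whenever $h\ge 4$; the lone special case is the Apex parent $h=2$, where $_f\x^{(2)}=\bigl((x_1+1)/2,(x_1+1)/2\bigr)$ and $(x_1+1)/2\ge 2$ follows from $x_1=n-1\ge 3$.

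The main obstacle is not a long computation but making the binary-to-type dictionary airtight, and in particular proving that the median $1$ splits the parent's central run into two \emph{equal} halves. This rests squarely on the oddness of the parent's pivot (Proposition~\ref{prop:3}) together with the exact symmetry of $\b$ about its centre; once these are secured, integrality, equal splitting and preservation of symmetry follow at once, and the residual generation, type-count and boundary checks are routine.
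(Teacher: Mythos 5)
Your argument is correct, and it actually supplies more than the paper does: in the paper this proposition is one of the ``rather informally'' stated breeding rules of Section~\ref{sect:NewApproachST}, which are asserted as the construction itself (with Prop.~\ref{prop:10} claiming bilateral symmetry ``by construction'' and Prop.~\ref{prop:13} justifying exhaustiveness only indirectly, by comparing the counts with $\Gamma(m)$ from Result~\ref{risu:risu4.1}); no verification of well-definedness or of why the split must be into \emph{equal} halves is given there. Your route differs in that you derive the rule rather than postulate it: you pass to the binary representation, observe that a bilaterally symmetric $_f\b$ of odd length $m+1$ extending the parent's half is determined by the median bit, and identify the median~$0$ with $G^{1}$ and the median~$1$ with $G^{2}$, the symmetry of $\b$ about its centre forcing the new type boundary to cut the central block of length $x_{h/2}+1$ into two runs of exactly $(x_{h/2}+1)/2$ --- with integrality resting precisely on the oddness of the parent's pivot (Prop.~\ref{prop:3}), which is indeed the point of the prior increment in Prop.~\ref{prop:6}. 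Your residual checks (palindromy of the new vector via $x'_t=x'_{h+1-t}$, the player count $n+1$ and type count $h+1$ placing $G^{2}$ at the odd-odd cell $(m+1,k+1)$ consistently with \eqref{4forth}, and the boundary constraints $x'_1\geq 2$, $x'_h\geq 2$ including the Apex case $h=2$, where $(x_1+1)/2=n/2\geq 2$) are all sound and are exactly the details the paper leaves implicit. What your approach buys is a self-contained proof that the split is forced and well defined, and as a by-product the uniqueness of the two children of an even-generation parent, which the paper obtains only through the cardinality comparison; the cost is reliance on the binary-to-type dictionary of Appendix~\ref{App:A}, which you correctly flag as the step needing care.
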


\begin{prop}\label{prop:8}
An OSTP $G^{1}$ of any odd generation, with even pivot
$\x^{1}_{h/2}=\x_{h/2}+1$ gives birth to a unique child OSTP
game $G^{11}$ of the next (even) generation. The child
replicates the structure of the father (and of the grandfather
also) except for the pivot which is now (coherently with prop.
\ref{prop:4}) surely an odd number
$\x^{11}_{h/2}=\x^{1}_{h/2}+1=\x_{h/2}+2$.
\end{prop}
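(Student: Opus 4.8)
The plan is to argue entirely at the level of the free type representation, invoking three facts established earlier: a vector of positive integers $(x_1,\ldots,x_{h-1})$ is the free type representation of a $P$ game exactly when $x_1\ge 2$ and $x_{h-1}\ge 2$; the game is $STP$ exactly when $x_t=x_{h-t}$ for every $t$; and its generation $m=n-4$ is recovered from $n=1+\sum_{t=1}^{h-1}x_t$, so that $\sum_{t=1}^{h-1}x_t=m+3$. First I would verify the forward claims. Since $G^1$ is $OSTP$ of odd generation $m$, the length $h-1$ is odd, its representation is symmetric, and by Proposition \ref{prop:4} the pivot $x^1_{h/2}$ is even. I define $G^{11}$ by copying every entry of $G^1$ and replacing the pivot with $x^{11}_{h/2}=x^1_{h/2}+1$. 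Bilateral symmetry survives because the pivot occupies the unique self-paired slot $h/2=h-h/2$, while each pair $\{t,h-t\}$ with $t\ne h/2$ is left untouched; positivity of all entries and the boundary constraints $x_1,x_{h-1}\ge 2$ are inherited unchanged, so $G^{11}$ is a legitimate $STP$ game.

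Because only the central entry was altered, $h$ and hence $h-1$ are unchanged and still odd, so $G^{11}$ is again $OSTP$; because $\sum_t x^{11}_t=\sum_t x^1_t+1=(m+3)+1$, it sits in generation $m+1$, which is even; and $x^{11}_{h/2}=x^1_{h/2}+1$ is odd, matching Proposition \ref{prop:3}. For uniqueness I would note that reaching the next generation forces $\sum_t x_t$ to rise by exactly one while the non-pivot part of the father is retained, and the single entry able to absorb that unit is the pivot, which therefore must become $x^1_{h/2}+1$; this pins down $G^{11}$.

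The genuinely delicate step is to exclude the ``split'' move that, in the even-to-odd transition (Propositions \ref{prop:5}--\ref{prop:7}), manufactured a second, $ESTP$ child by halving an even central entry into two equal central components. The decisive observation is one of parity: after the obligatory unit increment the pivot equals $x^1_{h/2}+1$, an \emph{odd} integer, which cannot be divided into two equal integer halves, so no $ESTP$ sibling can be formed and the child is indeed unique. This parity flip is precisely the asymmetry distinguishing the one-child (odd-to-even) step from the two-child (even-to-odd) step, and it is the heart of the argument. Finally, the stated identity $x^{11}_{h/2}=x_{h/2}+2$ and the claim that $G^{11}$ reproduces the grandfather's structure follow by composing this increment with the increment of Proposition \ref{prop:6} that bred $G^1$ from its even-generation parent, since each step modifies only the central entry.
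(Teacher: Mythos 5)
Your argument is correct, and it is worth noting that it is actually more detailed than what the paper itself offers: Proposition \ref{prop:8} is stated there as one of a list of informal breeding rules, with no individual proof. The paper's justification is global and count-based --- Proposition \ref{prop:10} asserts that all constructed games are bilaterally symmetric by construction, and Propositions \ref{prop:11}--\ref{prop:13} validate the scheme by checking that the resulting generation sizes reproduce the cardinalities $\Gamma(m)$ of Result \ref{risu:risu4.1}. You instead give a direct, local verification: legitimacy of $G^{11}$ from the preserved constraints $x_1\ge 2$, $x_{h-1}\ge 2$ and from the fact that only the self-paired central slot $h/2$ is altered (so symmetry survives trivially); the generation arithmetic via $\sum_t x_t=m+3$, which correctly forces the unit increment; the pivot parity matching Propositions \ref{prop:3} and \ref{prop:4}; and, most usefully, the parity obstruction --- the incremented pivot $x^1_{h/2}+1$ is odd and cannot be split into two equal integer central components --- which explains \emph{why} the odd-to-even step yields one child while the even-to-odd step yields two, something the paper leaves implicit. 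What each approach buys: your local argument makes Proposition \ref{prop:8} self-contained and pinpoints the structural reason for the one-child/two-child asymmetry, whereas the paper's counting argument, though silent on this mechanism, simultaneously delivers exhaustiveness of the whole tree (every $STP$ game is generated), which your argument does not address and the proposition does not strictly require. One small caveat: your uniqueness step presupposes that a child retains the father's non-central entries; this is definitional in the tree's breeding relation rather than derived, so it is legitimate, but you could strengthen it by observing that an even-length symmetric free type vector has even component sum, hence $ESTP$ games can only inhabit odd generations (Proposition \ref{prop:1}), ruling out a second child of $G^{1}$ independently of how it might be manufactured.
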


\begin{prop}\label{prop:9}
An ESTP $G^{2}$ of any odd generation gives birth to a unique
child OSTP game $G^{21}$ of the next (even) generation. The
child replicates the structure of the father but with the
insertion of the pivot 1 in the previous even representation of
$G^{2}$.
\end{prop}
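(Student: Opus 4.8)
The plan is to prove the statement by an explicit construction of the child $G^{21}$, followed by a counting argument that pins down uniqueness. First I would fix notation: let $G^{2}$ be an $ESTP$ game of odd generation $m$, so its free type representation $_f\x=(x_1,\ldots,x_{h-1})$ has an even number $h-1=2j$ of components, and bilateral symmetry $x_t=x_{h-t}$ forces the central pair to coincide, $x_j=x_{j+1}$. As $G^{2}$ is a $P$ game, the boundary constraints $x_1\geq 2$ and $x_{h-1}\geq 2$ hold, and $\sum_{t=1}^{h-1}x_t=n-1$, which is even, consistent with $m$, and hence $n$, being odd.

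I would then define $G^{21}$ as the game whose free type representation inserts a single component equal to $1$ between positions $j$ and $j+1$, namely $_f\x'=(x_1,\ldots,x_j,1,x_{j+1},\ldots,x_{h-1})$. The verification that this is the asserted child then reduces to routine checks. It is a legal free type representation, since its first and last entries remain $x_1\geq 2$ and $x_{h-1}\geq 2$ while every interior entry (including the new $1$) is $\geq 1$. It is bilaterally symmetric, because $_f\x'$ is a palindrome: the inserted $1$ occupies the unique centre, and the two flanks $(x_1,\ldots,x_j)$ and $(x_{j+1},\ldots,x_{h-1})$ are reversals of one another by the symmetry of $_f\x$; hence $G^{21}$ is $STP$. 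It has $2j+1$ components, an odd number, so $G^{21}$ is $OSTP$ and its pivot is exactly the inserted $1$, an odd value, in agreement with Propositions \ref{prop:1} and \ref{prop:3}. Finally its component sum is $(n-1)+1=n$, so $G^{21}$ has $n+1$ players and belongs to the next, even, generation $m+1$. This establishes existence together with every structural claim of the statement.

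The delicate point is uniqueness, i.e. that $G^{2}$ breeds exactly one child, in contrast with the two children of an even-generation $OSTP$ in Proposition \ref{prop:5}. I would argue this through the parent map on even-generation games. By Proposition \ref{prop:1} every such game is $OSTP$, and by Proposition \ref{prop:3} its pivot is odd, hence either equal to $1$ or $\geq 3$; these cases are mutually exclusive and exhaustive. If the pivot is $\geq 3$, decrementing it by one (the inverse of Proposition \ref{prop:8}, legitimate since the result is an even pivot $\geq 2$ as required by Proposition \ref{prop:4}) returns an odd-generation $OSTP$ parent; if the pivot is $1$, deleting the central component (the inverse of the present construction) returns an odd-generation $ESTP$ parent. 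Thus the parent map is well defined, so each child lies in exactly one litter, and every parent breeds at least one child. Since an odd-to-even step preserves cardinality, $\Gamma(m)=\Gamma(m+1)=2^{(m+1)/2}$, a sum of $2^{(m+1)/2}$ positive integers (the numbers of children) equal to $2^{(m+1)/2}$ forces each to be $1$; in particular $G^{2}$ has the single child $G^{21}$.

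The main obstacle I anticipate is precisely this uniqueness step. The forward construction and its four verifications are immediate, but turning ``gives birth to a unique child'' into a theorem requires showing that the parent assignment cleanly partitions all even-generation $OSTP$ games between the present construction (pivot $1$, to be deleted) and the Proposition \ref{prop:8} mechanism (pivot $\geq 3$, to be decremented). What makes this partition airtight is that the pivot of an even-generation game is always \emph{odd}, so that the alternatives ``pivot $=1$'' and ``pivot $\geq 3$'' exhaust all possibilities; this is exactly the content supplied by Proposition \ref{prop:3}, and without it the count could not be forced.
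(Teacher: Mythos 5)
Your proposal is correct, and it is in fact more rigorous than what the paper itself supplies. The paper states Proposition \ref{prop:9} as one of several breeding rules given ``rather informally'': bilateral symmetry of the offspring is asserted by construction (Prop.~\ref{prop:10}), and the only global justification is the counting check of Prop.~\ref{prop:13}, which compares the forward totals of Props.~\ref{prop:11} and \ref{prop:12} against the closed-form cardinalities of Result~\ref{risu:risu4.1}; uniqueness of the child is essentially true by fiat, the rule itself being the definition of the parent--child relation. You instead treat uniqueness as a theorem: you verify the forward construction explicitly (insertion of the central $1$ yields a palindrome of odd length $2j+1$ with pivot $1$, component sum $(n-1)+1$ hence $n+1$ players, and the boundary constraints $x_1\geq 2$, $x_{h-1}\geq 2$ untouched), and then you build the inverse parent map on even-generation games --- delete the central component when the pivot equals $1$, decrement it when the pivot is $\geq 3$ --- and invoke $\Gamma(m)=\Gamma(m+1)=2^{(m+1)/2}$ for odd $m$ to force every litter to be a singleton. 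This buys something real: your partition via the parent map simultaneously establishes the exhaustiveness claim of Props.~\ref{prop:13}--\ref{prop:14} (each even-generation $STP$ game has exactly one odd-generation parent), which the paper only checks by comparing counts. One remark for completeness: you cite Props.~\ref{prop:1}, \ref{prop:3} and \ref{prop:4} as facts about \emph{all} $STP$ games rather than only tree-built ones, and this is legitimate and non-circular, since each follows directly from applying the identity $\sum_{t=1}^{h-1}x_t=n-1$ to a palindromic free type vector --- exactly the even-sum observation you already make for $G^{2}$ --- so the case split ``pivot $=1$'' versus ``pivot $\geq 3$'' on which your uniqueness argument hinges is airtight.
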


\begin{prop}\label{prop:10}
All the games built according to these rules are by
construction bilaterally symmetric (hence STP).
\end{prop}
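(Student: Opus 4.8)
The plan is to argue by induction on the generation index $m$, proving that each breeding operation of Propositions \ref{prop:5}--\ref{prop:9} carries a bilaterally symmetric free type representation to another one. Recall that bilateral symmetry of $_f\x=(x_1,\ldots,x_{h-1})$ means $x_t=x_{h-t}$ for every $t=1,\ldots,h-1$. For the base case I would invoke Proposition \ref{prop:0}: the seed ``Adamo'' game has the one-component representation $_f\x=(3)$, which satisfies the symmetry condition vacuously. The inductive hypothesis is that every game of generation $m$ is bilaterally symmetric, and the inductive step checks the children of generation $m+1$, split according to the parity of $m$.

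When $m$ is even the parent $G$ is $OSTP$ (Proposition \ref{prop:1}), so it has a unique central entry and reads $(x_1,\ldots,x_{h/2-1},x_{h/2},x_{h/2-1},\ldots,x_1)$. The child $G^1$ of Proposition \ref{prop:6} modifies only this central entry, replacing $x_{h/2}$ by $x_{h/2}+1$ and leaving every flanking pair $x_t,x_{h-t}$ (with $t\neq h/2$) untouched, so bilateral symmetry is immediate. The child $G^2$ of Proposition \ref{prop:7} replaces the single central entry $x_{h/2}+1$ by the adjacent pair $(x_{h/2}+1)/2,(x_{h/2}+1)/2$; the two new equal halves straddle the centre of the now even-length vector while all outer pairs remain matched, so the result is again palindromic. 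Here the only extra point is integrality of the split, which follows because the even-generation pivot is odd by Proposition \ref{prop:3}, so $x_{h/2}+1$ is even.

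When $m$ is odd, half the games are $OSTP$ and half $ESTP$ (Proposition \ref{prop:2}). For an $OSTP$ parent $G^1$, the child $G^{11}$ of Proposition \ref{prop:8} again alters only the central entry (to $x_{h/2}+2$), so symmetry is preserved verbatim. For an $ESTP$ parent $G^2$, whose representation is an even-length palindrome, the child $G^{21}$ of Proposition \ref{prop:9} inserts a single entry $1$ at the exact centre; this new entry becomes the pivot, every pre-existing pair is displaced outward symmetrically, and the odd-length result is symmetric.

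The genuinely delicate steps are the two that change the number of components, namely the split of Proposition \ref{prop:7} and the insertion of Proposition \ref{prop:9}: for these one must verify explicitly that the re-indexed condition $x'_t=x'_{h'-t}$ on the lengthened vector follows from the symmetry of the parent, since the location of the centre and the pairing of indices shift when the parity of the number of components flips. Once this bookkeeping is carried out, every rule is seen to send symmetric representations to symmetric representations; the induction then closes: every game in the tree has a bilaterally symmetric free type representation and, being a $P$ game by construction, is therefore $STP$.
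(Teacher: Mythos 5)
Your proof is correct and follows essentially the same route as the paper, which simply asserts that the breeding rules preserve bilateral symmetry ``by construction'': your induction on the generation $m$, with the seed $(3)$ as base case and a rule-by-rule check that each of Propositions \ref{prop:5}--\ref{prop:9} maps palindromic free type representations to palindromic ones (including the parity argument via Proposition \ref{prop:3} for the integrality of the split in Proposition \ref{prop:7}), is exactly the formalization the paper leaves implicit. You have merely made explicit the re-indexing bookkeeping for the length-changing rules, which the paper takes for granted.
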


\begin{prop}\label{prop:11}
The number of STP (half OSTP, half ESTP) of any odd
generation is twice the number of the STP (all OSTP) of the
previous generation.
\end{prop}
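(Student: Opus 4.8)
The plan is to prove the statement constructively, reading it off the breeding rules of Propositions \ref{prop:1}--\ref{prop:10} rather than from the counting formulas, by exhibiting the generation map $m-1\mapsto m$ (for odd $m$, so that $m-1$ is even) as a bijection between the $STP$ games of generation $m-1$ and those of generation $m$, where the latter are counted with the multiplicity two supplied by Proposition \ref{prop:5}. First I would record that, by Proposition \ref{prop:1}, every $STP$ game of the even generation $m-1$ is $OSTP$, so the set of admissible parents coincides with the full set of $STP$ games of generation $m-1$, of cardinality $\Gamma(m-1)$. By Proposition \ref{prop:5} each parent $G$ gives birth to exactly two children $G^{1}$ ($OSTP$) and $G^{2}$ ($ESTP$) of generation $m$, and by Proposition \ref{prop:10} both are genuinely $STP$. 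Hence the procedure produces $2\,\Gamma(m-1)$ games of generation $m$, split evenly into $OSTP$ and $ESTP$ children; it remains to check that this production is exhaustive and non-redundant.

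The second step is injectivity of the map $G\mapsto\{G^{1},G^{2}\}$. The two children of a single parent are always distinct, since $G^{1}$ has an odd number of free-type components while $G^{2}$ has an even number, so they cannot coincide. For children of different parents I would build explicit left inverses: from an $OSTP$ child one recovers the parent by subtracting $1$ from its (even, by Proposition \ref{prop:4}) pivot, returning an odd pivot compatible with Proposition \ref{prop:3}; from an $ESTP$ child, whose two central components are equal by bilateral symmetry and each equal to $(x_{h/2}+1)/2$ by Proposition \ref{prop:7}, one recovers the parent by merging these two components and subtracting $1$. Since these inverse operations reconstruct the parent uniquely, distinct parents yield disjoint pairs of children.

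The third and most delicate step is completeness (surjectivity): every $STP$ game of the odd generation $m$ must arise as a child. By Proposition \ref{prop:2} the generation-$m$ $STP$ games partition into an $OSTP$ half and an $ESTP$ half. An $OSTP$ game of generation $m$ has an even pivot (Proposition \ref{prop:4}); subtracting $1$ yields a legitimate even-generation $OSTP$ parent (odd pivot, per Proposition \ref{prop:3}) whose $G^{1}$ child is exactly the given game. An $ESTP$ game of generation $m$ has an even number of components, so bilateral symmetry forces its two central components to be equal, say to $c$; merging them and subtracting $1$ produces an even-generation parent with odd pivot $2c-1$, whose $G^{2}$ child is the given game. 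This yields the bijection, whence $\Gamma(m)=2\,\Gamma(m-1)$ together with the $OSTP$/$ESTP$ split asserted in Proposition \ref{prop:2}. The main obstacle is precisely this surjectivity argument: one must confirm that each recovered parent is itself a \emph{bona fide} bilaterally symmetric game of the correct even generation and that the parity constraints of Propositions \ref{prop:3}--\ref{prop:4} match up so that no generation-$m$ game is left unaccounted for. As an independent cross-check, the identity is immediate from Result \ref{risu:risu4.1}, since $\Gamma(m)=2^{(m+1)/2}=2\cdot 2^{(m-1)/2}=2\,\Gamma(m-1)$ for odd $m$.
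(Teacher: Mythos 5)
Your proof is correct, but it takes a genuinely different route from the paper. The paper never proves Proposition \ref{prop:11} as a self-contained statement: it is asserted informally as a consequence of the breeding rules (Propositions \ref{prop:1}, \ref{prop:5} and \ref{prop:10}), which by themselves only show that the tree \emph{emits} $2\,\Gamma(m-1)$ games at each odd generation $m$; that these exhaust the STP games of generation $m$ is settled only in Proposition \ref{prop:13}, by comparing the tree's counts against the closed-form cardinalities of Result \ref{risu:risu4.1} (so your final ``independent cross-check'' $\Gamma(m)=2^{(m+1)/2}=2\cdot 2^{(m-1)/2}$ is in fact the paper's actual argument, not a mere check). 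You instead prove the doubling intrinsically, as a bijection between generation $m-1$ and pairs of generation-$m$ games: injectivity via explicit parent-recovery maps (decrement the even pivot of an OSTP child; merge the equal central pair of an ESTP child and subtract one), and surjectivity by showing every generation-$m$ STP game is so recovered, using the parity facts of Propositions \ref{prop:2}--\ref{prop:4}. This buys something real: it makes the completeness of the tree (Propositions \ref{prop:13}--\ref{prop:14}) logically independent of the counting formulas, whereas the paper's route could not be used to re-derive Result \ref{risu:risu4.1} from the tree without circularity. Two small points you flag but should actually execute: (i) verify that each recovered parent is a legitimate free type representation, i.e.\ that the constraints $x_1\geq 2$ and $x_{h-1}\geq 2$ survive --- the only delicate cases are the pivot-only representation (the Apex game $(n-1)$, where decrementing yields $(n-2)\geq 3$) and an ESTP child of the form $(c,c)$, where $c=x_1\geq 2$ gives a valid parent $(2c-1)$ with $2c-1\geq 3$; (ii) confirm the generation bookkeeping, namely that the sum of the free type components equals $n-1$, so adding $1$ to the pivot (or splitting the incremented pivot, which preserves the sum) moves the game up by exactly one generation. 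Both verifications go through, so your argument stands and is, if anything, a strengthening of the paper's.
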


\begin{prop}\label{prop:12}
The number of STP (all OSTP) of any even generation $m$ is the
same of the STP of the previous odd generation $m-1$ and twice
the number of STP of the previous even $m-2$ generation.
\end{prop}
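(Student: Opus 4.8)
The plan is to read the statement as two numerical identities about the census function $\Gamma(m)$ of Result~\ref{risu:risu4.1}, namely $\Gamma(m)=\Gamma(m-1)$ and $\Gamma(m)=2\,\Gamma(m-2)$ for even $m$, and to establish them through the breeding map underlying the genealogical tree rather than by merely reading off the closed forms. The closed forms do give an immediate sanity check: for even $m$ one has $\Gamma(m)=2^{m/2}$, while $\Gamma(m-1)=2^{((m-1)+1)/2}=2^{m/2}$ and $2\,\Gamma(m-2)=2\cdot 2^{(m-2)/2}=2^{m/2}$; I would quote this as a consistency verification but regard the constructive bijection as the proof in the spirit of this section.

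First I would fix even $m$ and consider the map $\varphi$ that sends each $STP$ game of the odd generation $m-1$ to its child of generation $m$. By Proposition~\ref{prop:8} every $OSTP$ parent of generation $m-1$ produces exactly one child, obtained by raising its (even) pivot by one; by Proposition~\ref{prop:9} every $ESTP$ parent of generation $m-1$ produces exactly one child, obtained by inserting a central pivot equal to $1$. Since by Proposition~\ref{prop:2} these two families exhaust the $STP$ games of generation $m-1$, the map $\varphi$ is well defined on all of them and, by Proposition~\ref{prop:1}, lands in the set of $OSTP$ games of generation $m$.

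The core of the argument is to show that $\varphi$ is a bijection. For injectivity I would recover the parent from the child: a child coming from an $OSTP$ parent has odd central component at least $3$ (an even pivot $\ge 2$ raised by one), so its parent is recovered by lowering that component by one, whereas a child coming from an $ESTP$ parent has central component exactly $1$, so its parent is recovered by deleting that central $1$; the two cases are separated by the value of the central component ($\ge 3$ versus $=1$), and within each case the inverse operation is single-valued. For surjectivity I would take an arbitrary $OSTP$ game of generation $m$, which by Proposition~\ref{prop:3} has odd pivot $p$, and split on $p$: if $p\ge 3$, lowering the pivot by one yields an $OSTP$ game of generation $m-1$ with even pivot (a legitimate parent by Proposition~\ref{prop:4}), while if $p=1$, deleting the central component yields an $ESTP$ game of generation $m-1$; in both cases the sum of the free type components drops by exactly one, confirming the preimage sits one generation earlier. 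This gives $\Gamma(m)=\Gamma(m-1)$, and the second identity then follows at once by composing with Proposition~\ref{prop:11}, which yields $\Gamma(m-1)=2\,\Gamma(m-2)$, so that $\Gamma(m)=\Gamma(m-1)=2\,\Gamma(m-2)$.

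I expect the main obstacle to be the bijectivity bookkeeping rather than any deep idea: one must verify that the $OSTP$-born and $ESTP$-born children are genuinely disjoint and jointly exhaust generation $m$, i.e. that the case split on the central component ($=1$ versus $\ge 3$) is exhaustive, and that the parity and component-count constraints (odd number of components, odd pivot, and the unit increment of the component sum in passing from generation $m-1$ to $m$) are respected throughout. Once this accounting is in place the statement is immediate.
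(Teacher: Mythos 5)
Your proof is correct, and it does strictly more than the paper itself does for this statement. The paper states Proposition \ref{prop:12} without a formal proof: it is one of the ``rather informally'' given evolutionary rules of the tree, and the only verification the paper offers is the one delegated to Proposition \ref{prop:13}, namely a direct comparison with the closed forms of Result \ref{risu:risu4.1} --- for even $m$, $\Gamma(m)=2^{m/2}$, $\Gamma(m-1)=2^{((m-1)+1)/2}=2^{m/2}$ and $2\Gamma(m-2)=2^{m/2}$ --- which is exactly your opening sanity check. What you add, and what the paper leaves implicit in Propositions \ref{prop:8}--\ref{prop:10}, is the verification that the parent-to-child map is a bijection from the STP games of generation $m-1$ onto the (all OSTP, by Proposition \ref{prop:1}) STP games of generation $m$: your separation of the children by the value of the central component (equal to $1$ for ESTP-born versus odd and at least $3$ for OSTP-born, an exhaustive split since even-generation pivots are odd by Proposition \ref{prop:3}) is precisely the bookkeeping the paper skips, and composing with Proposition \ref{prop:11} then delivers the factor of two. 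The trade-off is clear: the paper's route is a one-line consequence of the cardinality formulas already established in Section \ref{sect:selftwin}, whereas yours is self-contained within the tree formalism and simultaneously yields the exhaustiveness claim of Propositions \ref{prop:10} and \ref{prop:13} (that the tree captures every even-generation STP game), which the paper only asserts. One point deserving an explicit sentence in your writeup: in the surjectivity step, lowering the pivot from $p\ge 3$ to $p-1$ or deleting a central $1$ must be checked to preserve the constraints on free type vectors ($x_1\ge 2$, $x_{h-1}\ge 2$, positive components); in particular a central component equal to $1$ can never be the sole component, since $x_1\ge 2$, so the deletion case never produces an empty representation, and in the single-component (Apex) case the lowered pivot $p-1\ge 2$ still satisfies $x_1\ge 2$. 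With that noted, your argument is complete.
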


\begin{prop}\label{prop:13}

Prop. \ref{prop:0}, \ref{prop:11} and \ref{prop:12} jointly
grant that the STP of our genealogical tree exhaust the
cardinality of STP games for any value of $n$ (or $m$). Check
it through a simple comparison with the values of $\Gamma(m)$
given by Result \ref{risu:risu4.1}.
\end{prop}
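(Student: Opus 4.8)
The plan is to read Propositions \ref{prop:0}, \ref{prop:11} and \ref{prop:12} as a recurrence for a single integer sequence and then to verify that the closed forms of Result \ref{risu:risu4.1} solve exactly that recurrence with the same initial datum. First I would fix notation: let $T(m)$ denote the number of nodes (games) produced by the genealogical tree at generation $m$. Proposition \ref{prop:0} supplies the seed $T(0)=1$. Proposition \ref{prop:11} says that for odd $m$ each game of the previous generation breeds two children, so $T(m)=2\,T(m-1)$. Proposition \ref{prop:12} says that for even $m$ each game of the preceding odd generation breeds exactly one child, whence $T(m)=T(m-1)$, and moreover $T(m)=2\,T(m-2)$. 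Together with the seed, these relations determine $T(m)$ uniquely for every $m$.

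The next step is a short induction on $m$, split by parity, checking that the values of Result \ref{risu:risu4.1} obey the very same recurrence. For odd $m$ one has $2\,\Gamma(m-1)=2\cdot 2^{(m-1)/2}=2^{(m+1)/2}=\Gamma(m)$, matching Proposition \ref{prop:11}; for even $m$ one has $\Gamma(m-1)=2^{((m-1)+1)/2}=2^{m/2}=\Gamma(m)$ and $2\,\Gamma(m-2)=2\cdot 2^{(m-2)/2}=2^{m/2}=\Gamma(m)$, matching Proposition \ref{prop:12}; and $\Gamma(0)=2^{0}=1$ matches the seed. Since $T$ and $\Gamma$ satisfy identical recurrences with identical initial value, $T(m)=\Gamma(m)$ for all $m$ by induction.

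It then remains to pass from ``the tree produces $\Gamma(m)$ games at level $m$'' to ``the tree exhausts all $STP$ games with $n=m+4$''. By Proposition \ref{prop:10} every node of the tree is a genuine $STP$ game, so the level-$m$ nodes form a subset of the $STP$ games with that many players; granting that these nodes are pairwise distinct, this subset has cardinality $T(m)=\Gamma(m)$, which by Result \ref{risu:risu4.1} is the cardinality of the whole set. A subset of a finite set whose cardinality equals that of the ambient set must be the whole set, so the tree-games exhaust the class, as claimed.

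The hard part -- indeed the only point that is not purely formal -- will be to be sure that $T(m)$ really counts \emph{distinct} games, so that the level-$m$ nodes are $\Gamma(m)$ different $STP$ games rather than $\Gamma(m)$ games counted with repetition. This is where the explicit breeding rules of Propositions \ref{prop:5}--\ref{prop:9} must be invoked: each operation on the free type representation (raising the pivot by one, splitting the pivot into two equal central components, or inserting a unit pivot) is a deterministic and invertible transformation, so distinct parents yield distinct children and each child is attached to a unique parent. Granting this injectivity generation by generation, the node count coincides with the number of distinct games, and the counting argument above closes the proof.
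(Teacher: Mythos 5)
Your proof is correct and follows essentially the same route as the paper, whose own ``proof'' consists precisely of the comparison you carry out: the seed of Prop.~\ref{prop:0} plus the recurrences of Props.~\ref{prop:11} and \ref{prop:12} ($T(m)=2T(m-1)$ for odd $m$, $T(m)=T(m-1)=2T(m-2)$ for even $m$) match the closed forms $\Gamma(m)=2^{m/2}$ ($m$ even) and $\Gamma(m)=2^{(m+1)/2}$ ($m$ odd) of Result~\ref{risu:risu4.1}. Your additional attention to the distinctness of the nodes --- injectivity of the breeding maps of Props.~\ref{prop:5}--\ref{prop:9}, together with Prop.~\ref{prop:10} and the finite ``subset of equal cardinality is the whole set'' step --- is a welcome tightening of a point the paper leaves implicit (note the two maps into an even generation cannot collide, since children bred by Prop.~\ref{prop:8} have odd pivot at least $3$ while those bred by Prop.~\ref{prop:9} have pivot $1$), but it is a refinement of the same argument, not a different approach.
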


We may conclude that:

\begin{prop}\label{prop:14}
The genealogical tree introduced in this section fully
and unequivocally describes all ST (bilaterally symmetric) Parsimonious games.
\end{prop}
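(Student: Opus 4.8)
The plan is to prove Proposition \ref{prop:14} by showing that the map sending each node of the genealogical tree to the $P$ game it represents is a bijection onto the set of all $STP$ games of each generation. Three ingredients, all essentially assembled in this section, combine to give this: \emph{soundness} (every node is a genuine $STP$ game), the exact \emph{head-count} of the tree per generation, and \emph{injectivity} of the breeding process. Surjectivity --- and hence completeness --- will then come for free from a cardinality comparison.

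First I would record that soundness is exactly Proposition \ref{prop:10}: by construction every game produced by the breeding rules is bilaterally symmetric, hence $STP$. Next, the number of nodes of the tree at generation $m$ is pinned down by Proposition \ref{prop:13}, which --- via the single seed (Proposition \ref{prop:0}) together with the doubling and stability rules (Propositions \ref{prop:11} and \ref{prop:12}) --- gives exactly $\Gamma(m)$ nodes at generation $m$, matching the closed form of Result \ref{risu:risu4.1}. At this stage the tree supplies \emph{at most} $\Gamma(m)$ distinct $STP$ games in generation $m$; to upgrade this to \emph{exactly} the full set it remains only to show that no $STP$ game is bred twice.

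The heart of the argument is therefore injectivity, which I would establish by exhibiting an explicit parent map inverting the breeding rules. The two children of a single even-generation parent are automatically distinct, since one is $OSTP$ and the other $ESTP$ (Proposition \ref{prop:5}) and so they differ in their number of components. For children of \emph{different} parents I read Propositions \ref{prop:6}--\ref{prop:9} backwards: an $ESTP$ child (Proposition \ref{prop:7}) is recognised by its two equal central components, which are merged back into a single even pivot; an odd-generation $OSTP$ child (Proposition \ref{prop:6}) is recognised by its even pivot, which is decremented by one; and an even-generation $OSTP$ child is seen to arise from Proposition \ref{prop:8} or from Proposition \ref{prop:9} according to whether its (necessarily odd) pivot exceeds $1$ or equals $1$ --- in the former case one decrements the pivot, in the latter one deletes it. Since these cases are mutually exclusive and jointly exhaustive, the parent map is well defined, so distinct nodes at generation $m$ have distinct parents at generation $m-1$; by induction on $m$, starting from the lone seed of Proposition \ref{prop:0}, the whole tree is injective.

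Putting the pieces together finishes the proof: the tree realises exactly $\Gamma(m)$ pairwise distinct $STP$ games at each generation, and since $\Gamma(m)$ is the total number of $STP$ games with $n=m+4$ players (Result \ref{risu:risu4.1}), the realised set must be all of them, each appearing once --- this is the ``fully and unequivocally'' of the statement. I expect the main obstacle to be precisely this injectivity step, and within it the verification that the two ways an even-generation $OSTP$ can be bred (Propositions \ref{prop:8} and \ref{prop:9}) never collide and are correctly separated by the pivot value. The parity bookkeeping of Propositions \ref{prop:1}--\ref{prop:4} is what guarantees this clean separation, and carrying it consistently across generations is the one genuinely delicate point.
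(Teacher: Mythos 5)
Your proof is correct and follows the same skeleton as the paper's own argument: soundness from Proposition \ref{prop:10}, plus the per-generation head-count of Proposition \ref{prop:13} (seeded by Proposition \ref{prop:0} and driven by Propositions \ref{prop:11} and \ref{prop:12}) compared against the closed form of Result \ref{risu:risu4.1}. Where you genuinely go beyond the paper is the injectivity step. The paper's proof is, in its entirety, the two assertions that every bred game is bilaterally symmetric and that the number of bred games in generation $m$ equals $\Gamma(m)$; it never verifies that the breeding procedure yields pairwise \emph{distinct} games, which is precisely what the cardinality comparison needs in order to imply exhaustion. Your explicit parent map closes this gap: merging the two equal central components of an $ESTP$ child inverts Proposition \ref{prop:7}, decrementing the even pivot of an odd-generation $OSTP$ child inverts Proposition \ref{prop:6}, and the two possible origins of an even-generation $OSTP$ child are correctly separated by its pivot value, since a child bred under Proposition \ref{prop:8} has pivot equal to the grandparent's odd pivot plus two, hence at least $3$, while Proposition \ref{prop:9} always inserts pivot $1$. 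One phrasing slip to fix: ``distinct nodes at generation $m$ have distinct parents'' is false for odd $m$, where the two children of an even-generation parent share that parent; but since you had already distinguished siblings by their $OSTP$/$ESTP$ character (different numbers of components), the number of distinct games still equals the number of breeding events and the argument stands. The trade-off between the two versions: the paper's is shorter and simply reads the conclusion off the counting propositions, while yours is airtight and, as a by-product, the inverse map gives a constructive ancestry test that reconstructs the full lineage of any given $STP$ game back to the seed.
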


In this section we found that the pivot
components of $OSTP$ played a key role in the evolutionary rules
of the genealogical tree. In the next section we will see that
other triangles, no more of the $MPT$ type but nevertheless
characterized by significant regularities, are able to describe
the evolutionary rule behind the pivot behaviour.

\section{The pivot triangles}\label{sect:Pivot}

A pivot triangle is a triangle of numbers whose rows correspond to generations according to their $m$ value; in each row we find the sequences, in increasing order, of all feasible values of the pivots of the $OSTP$ games for the related generation. Columns are labelled by natural numbers $c=1,2,\ldots$. The value of the pivot in row $m$ and column $c$ is $x_{m,c}$, while $r_{m,c}$ is the number of $OSTP$ games of $m$ generation whose pivot is $x_{m,c}$ (shortly repetitions of $x_{m,c}$).

We will distinguish hereafter even triangles, associated to even generations ($m$ zero or even), and odd triangles associated to odd values of $m$.

\subsection{The pivot triangles of even generations}

Let us consider in this section only the even generations of
the tree. For any of the initial even generations, we list here
(Table 4) %\ref{Tabella:Tab71}
the values $x_{m,c}$ of the pivots involved, as
well as (in brackets) the number $r_{m,c}$ of repetitions of
each pivot's value.

\begin{table}[H]\label{Tabella:Tab71}%%\label{tab:7.1}
  \begin{center}\scriptsize
    \begin{tabular}{rr|rrrrrrr}
   % \multirow{9}{0.1cm}{\rotatebox{90}{~\parbox{0.5cm}{$m$}~}} & &  & & & & \\
    \multicolumn{9}{c}{$c$}\\
 %   & & & & & $c$ \\
     & & 1& 2 & 3 & 4 & 5 & 6 & 7 \\
    \hline
      &0  &  3(1)  & \\
      &2  &  1(1)  & 5(1) \\
      &4  &  1(2)  & 3(1)  & 7(1)  \\
     {\rotatebox{90}{~\parbox{0.01cm}{$m$}~}} &6  &  1(4)  & 3(2)  & 5(1) & 9(1)\\
      &8  &  1(8)  & 3(4)  & 5(2) & 7(1) & 11(1)\\
      &10 &  1(16) & 3(8)  & 5(4) & 7(2) & 9(1) & 13(1)\\
      &12 &  1(32) & 3(16) & 5(8) & 7(4) & 9(2) & 11(1) & 15(1)\\
  \end{tabular}
\end{center}\caption{Even generation pivots triangle}
\end{table}

To understand the table note that the evolutionary rules of the
genealogical tree imply that going from one even generation, $m$,
to the next even, $m+2$, each pivot $x_{m,c}$ gives rise to a couple
of grandchildren pivots, whose values are, respectively, $x_{m+2,c+1}=x_{m,c}+2$ and $x_{m+2,1}=1$. Hence,
if in the $m$ generation the odd pivot $x_{m,c}$ is repeated $r_{m,c}$
times, the odd pivot $x_{m,c}+2$ is repeated $r_{m,c}=r_{m+2,c+1}$
times in the following $m+2$ generation.

On the other side, Formula \eqref{3first} implies that
the odd pivot $x_{m,1}=1$ is repeated $r_{m+2,1}=2^{m/2}$ times in the
$m+2$ generation\footnote{Note that this must be equal to the sum of
numbers within brackets in the $m$ generation}.

These rules,
jointly with the initial condition that in generation 0 there
is the unique pivot $x_{0,1}=3$, give a straightforward explanation
of the whole table. Let us enter now in some detail.

\begin{prop}
For any row $m$ the number of columns, that is of different pivots of $m$ generation, is $\cfrac{m}{2}+1$.
\end{prop}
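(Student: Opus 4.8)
The plan is to argue by induction on the even generations $m=0,2,4,\ldots$, using the evolutionary rule recalled just above the statement: in passing from generation $m$ to generation $m+2$, every existing pivot value $x_{m,c}$ is carried to the shifted value $x_{m,c}+2$, while a single fresh pivot equal to $1$ is created. Since the number of \emph{distinct} pivot values in a row is exactly the number of columns, it suffices to track how this count evolves under the rule.

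For the base case $m=0$, Proposition \ref{prop:0} tells us that the unique generation-zero game is the Adamo game with free type representation $(3)$; thus there is a single pivot, namely $3$, and the number of columns is $1=\frac{0}{2}+1$. For the inductive step, suppose generation $m$ has exactly $\frac{m}{2}+1$ distinct pivots. The shift $p\mapsto p+2$ is strictly increasing, hence injective, so it sends the $\frac{m}{2}+1$ old pivots to $\frac{m}{2}+1$ still-distinct values. It remains only to verify that the newly inserted pivot $1$ is not among these shifted values. This is immediate from Proposition \ref{prop:3}: every pivot of an even generation is an odd (hence positive) integer, so each old pivot satisfies $x_{m,c}\geq 1$ and its shift satisfies $x_{m,c}+2\geq 3>1$. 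Therefore inserting $1$ raises the count of distinct pivots by exactly one, yielding $\left(\frac{m}{2}+1\right)+1=\frac{m+2}{2}+1$ columns for generation $m+2$, which closes the induction.

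The only delicate point — and thus the main obstacle, though a very mild one — is the collision check in the inductive step: one must be sure that the fresh pivot $1$ is genuinely new and not a duplicate of some shifted old value, for otherwise the count could fail to increase by exactly one. The parity and positivity supplied by Proposition \ref{prop:3} settle this at once, since all shifted pivots lie at or above $3$ while the value $1$ sits strictly below them. As a consistency check, the same rule shows that the distinct pivots of generation $m$ are precisely the consecutive odd numbers $1,3,\ldots,m-1$ together with the extra value $m+3$; these visibly number $\frac{m}{2}+1$ and agree entry-by-entry with the rows of Table 4.
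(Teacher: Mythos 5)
Your induction is correct, but it takes a genuinely different route from the paper. The paper disposes of the statement in one line: having already identified the pivot values in closed form (formulas \eqref{Pivsecond}--\eqref{Pivthird}, i.e.\ the pivots of row $m$ are exactly the positive odd integers not greater than $m+3$, with $m+1$ omitted), it simply counts that set --- there are $\frac{m+4}{2}$ positive odd integers up to $m+3$, and deleting $m+1$ leaves $\frac{m}{2}+1$. You instead run the recurrence directly: base case from the Adamo seed, and at each step from $m$ to $m+2$ the shift $p\mapsto p+2$ is injective while the fresh pivot $1$ lies strictly below every shifted value (all of which are at least $3$), so the number of distinct values grows by exactly one. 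Your collision check is exactly the right thing to worry about, and positivity alone settles it, as you observe (Proposition \ref{prop:3} is stronger than you need). What each approach buys: the paper's count is immediate but presupposes the explicit description of the pivot set; your induction needs only the local breeding rule stated just above the proposition, and in fact re-derives the explicit description as a by-product --- your closing consistency check, that the pivots of row $m$ are $1,3,\ldots,m-1$ together with $m+3$, is precisely the paper's characterization. Both arguments rest on the same tree dynamics, so the difference is one of organization rather than substance.
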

Indeed, it is the cardinality of the set of all positive odd integers not greater than $m+3$, except for $m+1$.

The pivots' values are as follows:
\begin{risu}
\begin{subequations}\label{Fnct:pivots}
\begin{align}
x_{0,1}=x_{0,(m/2)+1}=3 \quad \text{there is just one pivot}\label{Pivfirst}\\
x_{m,c}=2c-1\quad \text{for $m$ even and $c=1,\ldots,(m/2)$} \label{Pivsecond}\\
x_{m,c}=2c+1\quad\text{for $m$ even and $c=(m/2)+1$}  \label{Pivthird}
\end{align}
\end{subequations}
\end{risu}

Put $Z(m)=\sum_c x_{m,c}$, the sum of all pivots' values (without repetitions) of the $m$ generation. Then, it is:
\begin{risu}
\begin{subequations}\label{Fnct:SumPiv}
\begin{align}
    Z(m)=2+\Big(\cfrac{m}{2}+1\Big)^2 \text{or}\label{SumFirst}\\
    Y(m)=Z(m)-2=\Big(\cfrac{m}{2}+1\Big)^2\label{SumSecond}
\end{align}
\end{subequations}

\end{risu}

\begin{proof}
Subtracting 2 to the last entry of each row (the other entries unchanged), we find exactly the well known triangle of squared numbers, i.e. the triangle having on the rows the sequences of all odd integers up to $m+1$, whose sum of entries (on each row) is $\Big(\cfrac{m}{2}+1\Big)^2$, that is the sequence of squared natural numbers $(1,4,9,16,\ldots)$. See Table 5.
\end{proof}

\begin{table}[H]\label{Tabella:Tab73}%%\label{tab:7.3}
  \begin{center}\scriptsize
    \begin{tabular}{rr|rrrrrrr|rrr}
    %\multicolumn{12}{c}{$c$}\\
    & & & & & $c$ & & & &\\
     &&1&2 &3 & 4 & 5 & 6 & 7 &$Y(m)$\\
    %\cline{1-10}
    \hline
   % \multirow{8}{0.1cm}{\rotatebox{90}{~\parbox{0.5cm}{$m$}~}}   & & &  & & & & \\
      &0&1  &  & &&&&&$1$\\
      &2&1  &  3 &&&&&&$4$ \\
      &4&1  &  3 & 5 &&&&&$9$   \\
      {\rotatebox{90}{~\parbox{0.01cm}{$m$}~}}&6&1  &  3 & 5 & 7 &&&&$16$\\
      &8&1  &  3 & 5 & 7 & 9 &&&$25$\\
      &10&1  &  3 & 5 & 7 & 9 & 11 &&$36$\\
      &12&1  &  3 & 5 & 7 & 9 & 11 & 13 & $49$\\
      \hline
  \end{tabular}
\end{center}\caption{Squared numbers triangle.}
\end{table}

The sum of the row $m+2$ is obtained adding $m+3$ to the sum of the (non bracketed) entries of the previous row $m$ (see Table 4):
\begin{risu}
\begin{equation}%\label{}
    Z(m+2)=Z(m)+(m+3)
\end{equation}

\end{risu}

\begin{proof}
Immediate by Formula \eqref{SumFirst}. An enlightening pivotal explanation is the following: we know that each row (label $m$) has exactly $1+(m/2)$ columns. Going from the $m$ generation to the following, each of the $(m+2)/2$ pivot with value $x$ generates a pivot of value $x+2$; moreover, the pivot 1 is to be added (here we do not care the repetitions). The global increment is then $1+(2\cdot(m+2)/2)=1+(m+2)=m+3$.
\end{proof}

As for the repetitions, we have:
\begin{risu}
\begin{subequations}\label{Fnct:repetitions}
\begin{align}
r_{m,c}&=2^{(m/2)-c}\quad \text{for $m$ even and $c=1,\ldots,(m/2)$} \label{Repfirst}\\
r_{m,(m/2)+1}&=1\quad\text{for $m$ (zero or even) and $c=(m/2)+1$}  \label{Repsecond}
\end{align}
\end{subequations}
\end{risu}

This implies:
\begin{risu}
\begin{subequations}\label{Fnct:repetitions2}
\begin{align}
r_{m,1}&=\sum_c r_{m-2,c}\label{Rep2first}\\
r_{m,c}&=2\cdot r_{m-1,c}\quad\text{for $m$ even and $c=1,\ldots,(m/2)-1$} \label{Rep2second}\\
r_{m,c}&=1\quad\text{for $m$ even and $c=(m/2),(m/2)+1$} \label{Rep2third}
\end{align}
\end{subequations}
\end{risu}

It turns out that the highest rightward diagonal (corresponding to the first row $m=0$) has constant entries equal to 1, while those starting from the other rows ($m$ even), have constant entries $2^{(m/2)-1}$. See Table 6.

\begin{table}[H]\label{Tabella:Tab74}%%\label{tab:7.4}
  \begin{center}\scriptsize
    \begin{tabular}{rr|rrrrrrrrrrr}
    %\hline
        \multicolumn{9}{c}{$c$}\\
     & & 1& 2 & 3 & 4 & 5 & 6 & 7 \\
    \hline
     &  0  &  (1) & \\
     &  2  &  (1) & (1) \\
     &  4  &  (2) & (1) & (1)  \\
          {\rotatebox{90}{~\parbox{0.01cm}{$m$}~}} &6  &  (4)  & (2)  & (1) & (1)\\
     %&  6  &  (4) & (2) & (1) & (1)\\
     &  8  &  (8) & (4) & (2) & (1) & (1)\\
     & 10  &  (16) & (8) & (4) & (2) & (1) & (1)\\
     & 12  &  (32) & (16) & (8) & (4) & (2) & (1) & (1)\\
    %\hline
  \end{tabular}
\end{center}\caption{Pivot repetitions triangle}
\end{table}

Finally, as for the sum $\Phi(m)$ of all pivots value on each row (keeping account of the repetitions), it is for any $m$ (zero or even):
\begin{risu}
\begin{equation}\label{Eqn:Phi}
\Phi(m)=\sum_c x_{m,c}\cdot r_{m,c}=2^{m/2}\cdot 3
\end{equation}
\end{risu}

\begin{proof}
Immediate by induction. It is true for $m=0$; let us check that if it is true for any even $m$, it is true for the next even $(m+2)$.
It is:
\begin{equation}
\begin{split}
\Phi(m+2)&=\sum_c x_{m+2,c}\cdot r_{m+2,c}=\sum_c (x_{m,c}+3)\cdot r_{m,c}=\\
&=\sum_c x_{m,c}\cdot r_{m,c}+3\cdot\sum_c r_{m,c}=\Phi(m)+3\cdot 2^{m/2}=\\
&=2^{m/2}\cdot 3+3\cdot 2^{m/2}=2\cdot2^{m/2}\cdot 3=2^{(m+2)/2}\cdot 3\\
\end{split}
\end{equation}
\end{proof}

\subsection{The pivot triangles of odd generations}

Let us consider now the behaviour of the odd generations
($m=1,3,\ldots$) and in particular the pivot elements of the
subset with odd representation. It turns out that it mimics
(mutatis mutandis) the rule driving the even generation one;
this fact may be perceived by the following table:
\begin{table}[H]\label{Tabella:Tab75}%%\label{tab:7.5}ì
  \begin{center}\scriptsize
    \begin{tabular}{rr|rrrrr}
    %\hline
    %& &  $c$ & & &\\
    \multicolumn{7}{c}{$c$}\\
    & &  1& 2 & 3 & 4 & 5 \\
    \hline
      & 1  &  4(1) & \\
      & 3  &  2(1) & 6(1) \\
      {\rotatebox{90}{~\parbox{0.01cm}{$m$}~}}&5  &  2(2) & 4(1) & 8(1)  \\
      & 7  &  2(4) & 4(2) & 6(1) & 10(1)\\
      & 9  &  2(8) & 4(4) & 6(2) & 8(1) & 12(1)\\
    %\hline
  \end{tabular}
\end{center}\caption{Odd generation pivot triangle}
\end{table}

To understand the behaviour of the new triangle requires
nothing but to adapt all properties.

Let $x^{'}_{m^{'},c}$ and $r^{'}_{m^{'},c}$ be the entries of the odd triangle, with $m^{'}=m+1$ ($m$ even). It is immediate to check that, for any $m^{'},c$, we have:
\begin{prop}
\begin{equation}\label{Eqn:modd}
x^{'}_{m^{'},c}=x_{m,c}+1
\end{equation}
\begin{equation}\label{Eqn:rodd}
r^{'}_{m^{'},c}=r_{m,c}
\end{equation}
\end{prop}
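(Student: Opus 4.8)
The plan is to exhibit an explicit bijection between the OSTP games of the even generation $m$ and those of the odd generation $m^{'}=m+1$, and to read off both claimed identities from the evolutionary rules of the genealogical tree established in Section \ref{sect:NewApproachST}. The entire content is that the odd triangle is obtained from the even one by the order-preserving relabelling ``increment every pivot by one'', so that columns and repetition counts are untouched.

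First I would identify which games of generation $m^{'}=m+1$ are OSTP. By Proposition \ref{prop:1} all STP games of the even generation $m$ are OSTP, and by Proposition \ref{prop:5} each such parent $G$ (with pivot $x_{h/2}$) gives birth to exactly two children of generation $m+1$: an OSTP child $G^{1}$ and an ESTP child $G^{2}$. Since by Proposition \ref{prop:13} the tree exhausts all STP games, every OSTP game of generation $m+1$ arises as the OSTP child $G^{1}$ of some even-generation parent. A counting check confirms this is a genuine bijection: by Proposition \ref{prop:11} the number of STP of generation $m+1$ is twice the number of generation $m$, while by Proposition \ref{prop:2} exactly half of the former are OSTP, so the OSTP of generation $m+1$ are equinumerous with the (all OSTP) games of generation $m$. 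As $G^{1}$ replicates the entire representation of $G$ except its pivot (Proposition \ref{prop:6}), distinct parents yield distinct children, and hence $G\mapsto G^{1}$ is a bijection between the two OSTP sets.

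Next I would track the pivot under this bijection. By Proposition \ref{prop:6} the child $G^{1}$ has pivot $x^{1}_{h/2}=x_{h/2}+1$, so the bijection sends a generation-$m$ game with pivot value $v$ to a generation-$(m+1)$ game with pivot value $v+1$. Since $v\mapsto v+1$ is strictly increasing it preserves the ordering of the distinct pivot values, so the $c$-th smallest distinct pivot of generation $m+1$ equals one plus the $c$-th smallest distinct pivot of generation $m$; this is exactly \eqref{Eqn:modd}, $x^{'}_{m^{'},c}=x_{m,c}+1$. Finally, the bijection carries each generation-$m$ game with pivot $x_{m,c}$ to a generation-$(m+1)$ game with pivot $x_{m,c}+1=x^{'}_{m^{'},c}$; since it is a bijection the number of such preimages equals the number of images, which gives \eqref{Eqn:rodd}, $r^{'}_{m^{'},c}=r_{m,c}$.

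The only point needing genuine care is the surjectivity of $G\mapsto G^{1}$, namely that no OSTP game of generation $m+1$ is produced other than as the OSTP child of an even-generation parent; this rests on the exhaustiveness of the genealogical tree (Proposition \ref{prop:13}) together with the cardinality bookkeeping of Propositions \ref{prop:2} and \ref{prop:11}. Everything else is the mechanical observation that adding $1$ to every pivot value is an order-preserving relabelling that leaves the column indices $c$ unchanged.
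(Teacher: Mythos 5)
Your proof is correct and follows essentially the same route as the paper, which simply asserts the identities are ``immediate to check'' from the evolutionary rules of the genealogical tree: your bijection $G\mapsto G^{1}$, with the pivot incremented by one (Proposition \ref{prop:6}) and exhaustiveness plus the parity bookkeeping (Propositions \ref{prop:1}, \ref{prop:2}, \ref{prop:5}, \ref{prop:11}, \ref{prop:13}), is exactly the reasoning the paper leaves implicit. You have merely spelled out, carefully and correctly, what the authors compress into one sentence.
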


Exploiting Formulas \eqref{Eqn:modd} and \eqref{Eqn:rodd} it is
easy to obtain the counterpart of all results given in the
previous subsection. Here, we provide only the analogous of
Formula \eqref{Eqn:Phi}. Putting $$\Psi(m^{'})=\sum_c
x^{'}_{m^{'},c}\cdot r_{m^{'},c}$$ it is:
\begin{risu}
\begin{equation}\label{Eqn:Psi}
\Psi(m^{'})=2^{(m^{'}-1)/2}\cdot 4
\end{equation}
\end{risu}
\begin{proof}
It is:
\begin{equation}
\begin{split}
\Psi(m^{'})&=\sum_c x^{'}_{m^{'},c}\cdot r_{m^{'},c}=\sum_c (x_{m,c}+1)\cdot r_{m,c}=\\
&=\sum_c x_{m,c}\cdot r_{m,c}+\sum_c r_{m,c}=\Phi(m)+ 2^{m/2}=\\
&=2^{m/2}\cdot 3+2^{m/2}=4\cdot 2^{m/2}=4\cdot 2^{(m^{'}-1)/2}\\
\end{split}
\end{equation}
\end{proof}

\section{Examples}\label{sect:example}

\paragraph{Example 8.1.} The first example keeps $m=4$, that is $n=8$. There are
$2^4=16$ $P$ games, 4 of which are self twin, while the other
12 may be seen as 6 pairs of non identical twins. Table 3
%\ref{Tabella:Tab3}
reveals that those pairs may be divided in 3 groups with respectively three, four or five types. Each group contains 2 pairs of non identical twins. Here the complete list of the 16 $P$ games, grouped by
their $h$ value and expressed through their free type
representation (in parenthesis) with $h-1$ components.

\begin{table}[H]%\label{Tabella:}
  \begin{center}\scriptsize
    \begin{tabular}{rrl}
    %\hline
      $h=2$: & (7) & self twin\\
    \hline
      $h=3$: & (3,4) and (4,3) & first pair of twins\\
             & (2,5) and (5,2) & second pair of twins\\
    \hline
      $h=4$: & (2,3,2) & first self twin\\
             & (3,1,3) & second self twin\\
             & (2,1,4) and (4,1,2) & first pair of twins\\
             & (2,2,3) and (3,2,2) & second pair of twins\\
    \hline
      $h=5$: & (2,1,2,2) and (2,2,1,2) & first pair of twins\\
             & (2,1,1,3) and (3,1,1,2) & second pair of twins\\
     \hline
      $h=6$: & (2,1,1,1,2) & self twin\\
    \hline
  \end{tabular}
\end{center}%\caption{}
\end{table}

The corresponding rows of our $MPT$ are ($m=4$, $k$ from 0 to
4):
  $$C: 1,4,6,4,1$$
  $$\Gamma: 1,0,2,0,1$$
  $$\Delta: 0,4,4,4,0$$
  $$\Theta: 0,2,2,2,0$$

The minimal homogeneous representations are:
\begin{table}[H]%\label{Tabella:}
  \begin{center}\scriptsize
    \begin{tabular}{rrr}
    %\hline
      $h=2$: & (7;1,1,1,1,1,1,1,6) & \\
    \hline
      $h=3$: & (13;1,1,1,3,3,3,3,10) & (13;1,1,1,1,4,4,4,9)\\
             & (11;1,1,2,2,2,2,2,9) & (11;1,1,1,1,1,5,5,6)\\
    \hline
      $h=4$: & (16;1,1,2,2,2,7,7,9) & \\
             & (15;1,1,1,3,4,4,4,11) & \\
             & (14;1,1,2,3,3,3,3,11) & (14;1,1,1,1,4,5,5,9)\\
             & (17;1,1,2,2,5,5,5,12) & (17;1,1,1,3,3,7,7,10)\\
    \hline
      $h=5$: & (19;1,1,2,3,3,8,8,11) & (19;1,1,2,2,5,7,7,12)\\
             & (18;1,1,2,3,5,5,5,13) & (18;1,1,1,3,4,7,7,11)\\
     \hline
      $h=6$: & (21;1,1,2,3,5,8,8,13) & \\
    \hline
  \end{tabular}
\end{center}%\caption{}
\end{table}

It is interesting to observe that (as proved in
\cite{PrePla13},Theorem T5.1, sect. 5.1), all pairs of twins
have the same minimal winning quota $q$ (of course pair
dependent).

\paragraph{Example 8.2}

The second example keeps $n=9$ ($m=5$). There are 32 $P$ games,
8 of which self twin, while the other 24 may be divided in 12
pairs of twins.

Here the complete list of the 32 $P$ games, grouped by their
$h$ value:

\begin{table}[H]%\label{Tabella:}
  \begin{center}\scriptsize
    \begin{tabular}{rrl}
    %\hline
      $h=2$: & (8) & self twin\\
    \hline
      $h=3$: & (4,4) & self twin\\
             & (3,5) and (5,3) & first pair of twins\\
             & (2,6) and (6,2) & second pair of twins\\
    \hline
      $h=4$: & (2,4,2) & first self twin\\
             & (3,2,3) & second self twin\\
             & (2,1,5) and (5,1,2) & first pair of twins\\
             & (2,2,4) and (4,2,2) & second pair of twins\\
             & (2,3,3) and (3,3,2) & third pair of twins\\
             & (3,1,4) and (4,1,3) & forth pair of twins\\
    \hline
      $h=5$: & (2,2,2,2) & first self twin\\
             & (3,1,1,3) & second self twin\\
             & (2,1,1,4) and (4,1,1,2) & first pair of twins\\
             & (2,1,2,3) and (3,2,1,2) & second pair of twins\\
             & (2,1,3,2) and (2,3,1,2) & third pair of twins\\
             & (3,1,2,2) and (2,2,1,3) & forth pair of twins\\
     \hline
      $h=6$: & (2,1,2,1,2) & self twin\\
             & (2,1,1,2,2) and (2,2,1,1,2) & first pair of twins\\
             & (2,1,1,1,3) and (3,1,1,1,2) & second pair of twins\\
    \hline
      $h=7$: & (2,1,1,1,1,2) & self twin\\
    \hline
  \end{tabular}
\end{center}
\end{table}

The corresponding rows of our $MPT$ are ($m=5$, $k$ from 0 to
5):
  $$C: 1,5,10,10,5,1$$
  $$\Gamma: 1,1,2,2,1,1$$
  $$\Delta: 0,4,8,8,4,0$$
  $$\Theta: 0,2,4,4,2,0$$

The minimal homogeneous representations are:
\begin{table}[H]%\label{Tabella:}
  \begin{center}\scriptsize
    \begin{tabular}{rrl}
    %\hline
      $h=2$: & (8;1,1,1,1,1,1,1,1,7) & \\
    \hline
      $h=3$: & (17;1,1,1,1,4,4,4,4,13) & \\
             & (16;1,1,1,3,3,3,3,3,13) & (16;1,1,1,1,1,5,5,5,11)\\
             & (13;1,1,2,2,2,2,2,2,11) & (13;1,1,1,1,1,1,6,6,7)\\
    \hline
      $h=4$: & (20;1,1,2,2,2,2,9,9,11) & \\
             & (24;1,1,1,3,3,7,7,7,17) & \\
             & (17;1,1,2,3,3,3,3,3,14) & (17;1,1,1,1,1,5,6,6,11)\\
             & (22;1,1,2,2,5,5,5,5,17) & (22;1,1,1,1,4,4,9,9,13)\\
             & (23;1,1,2,2,2,7,7,7,16) & (23;1,1,1,3,3,3,10,10,13)\\
             & (19;1,1,1,3,4,4,4,4,15) & (19;1,1,1,1,4,5,5,5,14)\\
    \hline
      $h=5$: & (29;1,1,2,2,5,5,12,12,17) & \\
             & (25;1,1,1,3,4,7,7,7,18) & \\
             & (23;1,1,2,3,5,5,5,5,18) & (23;1,1,1,1,4,5,9,9,14)\\
             & (27,1,1,2,3,3,8,8,8,19) & (27;11,1,1,3,3,7,10,10,17)\\
             & (25;1,1,2,3,3,3,11,11,14) & (25;1,1,2,2,2,7,9,9,16)\\
             & (26;1,1,1,3,4,4,11,11,15) & (26;1,1,2,2,5,7,7,7,19)\\
     \hline
      $h=6$: & (30;1,1,2,3,3,8,11,11,19) & \\
             & (31;1,1,2,3,5,5,13,13,18) & (31;1,1,2,2,5,7,12,12,19)\\
             & (29;1,1,2,3,5,8,8,8,21) & (29;1,1,1,3,4,7,11,11,18)\\
    \hline
      $h=7$: & (34;1,1,2,3,5,8,13,13,21) & \\
    \hline
  \end{tabular}
\end{center}
\end{table}

\section{Conclusions}\label{sect:concl}

In this paper we discuss the role of modified Pascal triangles
in describing the cardinality of self twin (bilaterally
symmetric) Parsimonious games for any combination of the
relevant parameters $m,k$ associated respectively to the number
$n=m+4$ of players and $h=k+2$ of types in the game. In detail,
we show that the entries of the modified triangles follow
almost wholly (with a slight modification) the evolutionary
rule embedded in the basic combinatorial relation (Pascal
equation) which gives any binomial coefficient as the sum of
two adjacent coefficient of the previous row of the classic
Pascal triangle. In addition we also provide a genealogical
tree of self twin games, in which each game of a given
generation (corresponding to a value of $m$) is able to give
birth to one or two (depending on the parity of $m$) children
self twin games of the next generation. The breeding rules,
defined in terms of the free type representation, are, given
the parity, invariant across generations. They are
quite simple and may be translated in an high speed pen and
pencil constructive procedure to obtain all self twin games for
small enough values of $n$; obviously a simple computational routine
produces the set of all self twin games also for large values
of $n$ with the only constraint of the computational power
(recall that the number of self twin games explodes at the
rhythm of the square root of $2^{n-4}$).

The analysis of the genealogical tree revealed that a key role in
its evolution is played by the pivot components of the subset
of self twin Parsimonious games whose free type representation has an odd number of
components. We found that other triangles
(pivot triangles) describe the structure of the pivot's set and
give a synthesis of their evolutionary pattern.

We are aware that our paper is wholly
theoretical so we leave to subsequent research to look for
practical applications to hard or social sciences. Indeed, we
hope that such results could follow given the prominent role
played by bilateral symmetry in many fields of the life of the
universe\footnote{There is a huge literature concerning
symmetry in hard sciences; besides the references given in the introduction, let us recall here some prominent
sentences: ``Symmetry is one idea by which man through the ages
has tried to comprehend and create order, beauty and
perfection'' (\cite{We52}, p. 5); ``Symmetry considerations
dominate modern fundamental physics both in quantum theory and
in relativity'' (\cite{BrCa03}, p. ix preface); ``Symmetry
plays an essential role in science'' (\cite{Ros96}, editor
foreword); ``Bilateral symmetry is a hallmark of Bilateralia''
(\cite{Fi03}, abstract); ``Understanding the origin and
evolution of bilateral organism requires an understanding of
how bilateral symmetry develops, starting from a single cell''
(\cite{Wie12}, abstract).}.

\appendix
\section{Appendix: proof of the proposition \ref{prop:3.1}}\label{App:A}

Define $\I(\b)=(I_1,I_2,\ldots,I_h)$ as the vector of ordinal labellings of 1's of $\b$.

\vspace{0.2cm}

For example, if $\b=(1,0,1,0,1,1,0,1,0,1)$, $\I(\b)=(1,3,5,6,8,10)$.

\vspace{0.2cm}

It follows that, for $t=1,\ldots,h-1$, it is:
$$x_t=I_{t+1}-I_{t} \quad \text{and } \quad x_h=1$$

Conversely, for any $t>2$, it is:
$$
I_{t}=1+\sum_{j=1}^{t-1}x_j\quad \text{and } \quad I_1=1
$$

In the example: $x_1=3-1=2$, $x_2=5-3=2$, $x_3=6-5=1$, $x_4=8-6=2$, $x_5=10-8=2$ and $x_6=1$, or $_f\x=(2,2,1,2,2)$.

\vspace{0.2cm}

Hypothesis: $_f\b$ bilaterally symmetric, hence $\b$ bilaterally symmetric. See Proposition \ref{prop:3.1}.

We show that this implies that also $_f\x$ is bilaterally symmetric.
\begin{prop}\label{prop:A1}
Bilateral symmetry of $\b$ is equivalent to the following property of $\I(\b)$:
\end{prop}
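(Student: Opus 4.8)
The plan is to read bilateral symmetry of $\b$ as invariance of its set of $1$-positions under the reflection involution $\sigma(i)=n+1-i$ on $\{1,\ldots,n\}$, and then to translate that invariance into a statement about the ordered index vector $\I(\b)=(I_1,\ldots,I_h)$. The characterizing property I expect is $I_t+I_{h+1-t}=n+1$ for every $t=1,\ldots,h$; equivalently, $\sigma$ carries the $t$-th smallest $1$ of $\b$ to the $t$-th largest. Both implications should then follow from elementary properties of $\sigma$ together with the fact that $I_1<\cdots<I_h$ lists the $1$-positions in increasing order.

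First I would treat the forward direction. The condition $b_i=b_{n+1-i}$ says exactly that $i$ is a $1$-position of $\b$ if and only if $\sigma(i)=n+1-i$ is one; hence $\sigma$ maps the set $\{I_1,\ldots,I_h\}$ onto itself. Since $\sigma$ is strictly order-reversing, it sends the increasing list $I_1<\cdots<I_h$ to the decreasing list $\sigma(I_1)>\cdots>\sigma(I_h)$, which is the same set written from largest to smallest. Matching the $t$-th entry of one list with the $t$-th entry of the other forces $\sigma(I_t)=I_{h+1-t}$, i.e. $n+1-I_t=I_{h+1-t}$, which is the claimed relation.

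For the converse I would assume $I_t+I_{h+1-t}=n+1$ for all $t$ and verify $b_i=b_{n+1-i}$ for every $i$ by splitting on the value of $b_i$. If $b_i=1$, then $i=I_t$ for some $t$, and the hypothesis gives $n+1-i=I_{h+1-t}$, again a $1$-position, so $b_{n+1-i}=1$. If instead $b_i=0$, suppose for contradiction that $b_{n+1-i}=1$; then $n+1-i=I_s$ for some $s$, whence $i=n+1-I_s=I_{h+1-s}$ by the hypothesis, making $i$ a $1$-position and contradicting $b_i=0$. Thus $b_{n+1-i}=0$ as well, and $\b$ is bilaterally symmetric.

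The bookkeeping is light, and the only point that genuinely needs care is the order-reversing step in the forward direction: one must not equate $\sigma(I_t)$ with $I_t$, but rather track that reflection reverses the order of the index list, so that the $t$-th entry from the left is interchanged with the $t$-th entry from the right. Once this pairing is pinned down the equivalence is immediate, and the relation $I_t+I_{h+1-t}=n+1$ feeds directly into the recursion $x_t=I_{t+1}-I_t$ to yield $x_t=x_{h-t}$, which is the symmetry of $_f\x$ that the appendix is ultimately after.
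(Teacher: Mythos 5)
Your proof is correct, and in fact it supplies more than the paper does: in Appendix A the proposition is simply asserted and illustrated with the single example $\b=(1,0,1,0,1,1,0,1,0,1)$, $\I(\b)=(1,3,5,6,8,10)$, with no argument given for either direction. Your reading of bilateral symmetry as invariance of the set of $1$-positions under the involution $\sigma(i)=n+1-i$ is exactly the right formalization, and you correctly identify the one point where care is needed: since $\sigma$ is strictly order-reversing, it sends the increasing enumeration $I_1<\cdots<I_h$ to the same set enumerated decreasingly, which forces the pairing $\sigma(I_t)=I_{h+1-t}$ rather than $\sigma(I_t)=I_t$; this is what yields $I_t+I_{h+1-t}=n+1$. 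The converse, split on $b_i=1$ and $b_i=0$ with the contradiction argument in the second case, is airtight (the hypothesis makes the map $I_t\mapsto n+1-I_t$ a bijection of the $1$-positions onto themselves, so $0$-positions must also map to $0$-positions). Your closing observation that the relation then combines with $x_t=I_{t+1}-I_t$ to give $x_t=x_{h-t}$ matches precisely how the appendix uses Property A.1 afterwards, so your argument slots into the paper's derivation without modification.
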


\begin{pr}\label{pr:A1}
For any couple of symmetric subscripts $t$ and $h+1-t$ (sum of subscripts $h+1$) it is:
$$
I_t+I_{h+1-t}=n+1
$$
\end{pr}

In the example, it is:
$$
I_1+I_6=1+10=I_2+I_5=3+8=I_3+I_4=5+6=10+1
$$

Considering now the difference $(x_t-x_{h-t})$, it is for any
$t=1,\ldots,h-1$:

$$
x_t-x_{h-t}=(I_{t+1}-I_{t})-(I_{h+1-t}-I_{h-t})=(I_{t+1}+I_{h-t})-(I_t+I_{h+1-t})=(n+1)-(n+1)=0
$$

The last but one equality is a consequence of Property
\ref{pr:A1}, keeping account that the sum of indices of the
terms in both brackets is $h+1$. Hence $ x_t-x_{h-t}=0$, which
means that $_f\x$ is bilaterally symmetric.

\vspace{0.2cm}
Conversely, hypothesis: $_f\x$ bilaterally symmetric. We show that this implies that also $_f\b$ is bilaterally symmetric.

By hypothesis, for any $t=1,\ldots,h-1$, it is:
$$
x_t=x_{h-t}
$$

Substitution gives:
$$
I_{t+1}-I_{t}=I_{h+1-t}-I_{h-t}
$$
equivalent to:
$$
I_{t+1}+I_{h-t}=I_{t}+I_{h+1-t}
$$
The sum of subscripts in both sides of the equation is $h+1$, thus Property \ref{pr:A1} is satisfied and, by proposition \ref{prop:A1} this implies bilateral symmetry of $\b$.

\section{Appendix: proofs of evolutionary rules of the $MPT$
$\Gamma$}\label{App:B}

%============appendice===========================
\begin{proof} Case $m$ even, $k$ even. Recalling Formula \eqref{4first}, it is:
\begin{equation}
      \Gamma(m,k)=C(m/2,k/2)
\end{equation}
Now, denoting by $m^{'}=m-1$ and $k^{'}=k-1$ both odd and
keeping account of Formula \eqref{4third}:
\begin{equation}
      \Gamma(m-1,k)=\Gamma(m^{'},k)=C\Big((m^{'}-1)/2,k/2\Big)=C\Big((m/2)-1,k/2\Big)
\end{equation}
while, applying Formula \eqref{4forth}:
\begin{equation}
      \Gamma(m-1,k-1)=\Gamma(m^{'},k^{'})=C\Big((m^{'}-1)/2,(k^{'}-1)/2\Big)=C\Big((m/2)-1,(k/2)-1\Big)
\end{equation}
Being
$$
C\Big(m/2,k/2\Big)=C\Big((m/2)-1,(k/2)\Big)+C\Big((m/2)-1,(k/2)-1\Big)
$$
we found that Formula \eqref{eqn:new10} is satisfied.
\end{proof}

\begin{proof} Case $m$ odd, $k$ odd. Recalling Formula \eqref{4forth}, it is:
\begin{equation}
      \Gamma(m,k)=C\Big((m-1)/2,(k-1)/2\Big)
\end{equation}
In turn, for Formula \eqref{4second}:
\begin{equation}
      \Gamma(m-1,k)=0
\end{equation}
while, applying Formula \eqref{4first}:
\begin{equation}
      \Gamma(m-1,k-1)=\Gamma(m^{'},k^{'})=C\Big(m^{'}/2,k^{'}/2\Big)=C\Big((m-1)/2,(k-1)/2\Big)
\end{equation}
so that we found again that Formula \eqref{eqn:new10} is satisfied.
\end{proof}

\begin{proof} Case $m$ odd, $k$ even. Recalling Formula \eqref{4third}, it is:
\begin{equation}
      \Gamma(m,k)=C\Big((m-1)/2,k/2\Big)
\end{equation}
Now, keeping account of Formula  \eqref{4first}:
\begin{equation}
      \Gamma(m-1,k)=\Gamma(m^{'},k)=C\Big(m^{'}/2,k/2\Big)=C\Big((m-1)/2,k/2\Big)
\end{equation}
on the other side, applying Formula \eqref{4second} ($m-1$
even, $k-1$ odd):
\begin{equation}
      \Gamma(m-1,k-1)=0
\end{equation}
So that Formula \eqref{eqn:ex54} is satisfied.
\end{proof}

\begin{proof} Case $m$ even, $k$ odd. Recalling Formula \eqref{4second}, it is:
\begin{equation}
      \Gamma(m,k)=0
\end{equation}
and let us check that:
\begin{equation}
      \Gamma(m-1,k)=\Gamma(m-1,k-1)
\end{equation}
Indeed
\begin{equation}
      \Gamma(m-1,k)=\Gamma(m^{'},k)=C\Big((m^{'}-1)/2,(k-1)/2\Big)
\end{equation}
while
\begin{equation}
      \Gamma(m-1,k-1)=\Gamma(m^{'},k^{'})=C\Big((m^{'}-1)/2,k^{'}/2\Big)=C\Big((m^{'}-1)/2,(k-1)/2\Big)
\end{equation}
Again Formula \eqref{eqn:ex54} is satisfied.
\end{proof}

\end{document}